\DeclareFontFamily{OML}{rsfs}{\skewchar\font'177}
\DeclareFontShape{OML}{rsfs}{m}{n}{ <5> <6> rsfs5 <7> <8> <9> rsfs7
  <10> <10.95> <12> <14.4> <17.28> <20.74> <24.88> rsfs10 }{}
\DeclareMathAlphabet{\mathfs}{OML}{rsfs}{m}{n}
\definecolor{halfgray}{gray}{0.55} 
\definecolor{webgreen}{rgb}{0,0.5,0}
\definecolor{webbrown}{rgb}{.6,0,0} \hypersetup{%
\newtheorem{theorem}{Theorem}[section]
\newtheorem{lemma}[theorem]{Lemma}
\newtheorem{proposition}[theorem]{Proposition}
\newtheorem{corollary}[theorem]{Corollary}
\theoremstyle{definition}
\newtheorem{definition}[theorem]{Definition}
\newtheorem{remark}[theorem]{Remark}
\newtheorem{ltheorem}{Theorem} 
\def\real{\mathbb{R}}
\def\complex{\mathbb{C}}
\def\natural{\mathbb{N}}
\def\field{\mathbb{K}}
\def\d{\operatorname{dist}}
\def\dim{\operatorname{dim}}
\def\car{\operatorname{1}}
\def\per{\operatorname{per}}
\def\cV{\mathfs R}
\def\cN{\mathcal{N}}
\def\hA{\widehat{A}}
\def\hrho{\widehat{\rho}}
\def\GL{GL(d,\field)}
\def\GLC{GL(d,\complex)}
\def\hA{\widehat{A}}
\def\hB{\widehat{B}}
\def\hF{\widehat{F}}
\def\hsigma{\widehat{\sigma}}
\def\hSigma{\widehat{\Sigma}}
\def\hDelta{\widehat{\Delta}}
\def\hpi{\widehat{\pi}}
\def\hx{\underline R}
\def\hz{\underline U}
\def\hp{\underline P}
\def\hy{\underline S}
\def\hm{\widehat{m}}
\def\hmu{\widehat{\mu}}
\def\hnu{\widehat{\nu}}
\def\hpsi{\widehat{\psi}}
\def\hphi{\widehat{\phi}}
\def\tR{\widetilde{R}}
\newcommand{\norm}[1]{{\left\lVert  #1  \right\rVert}}
\newcommand{\abs}[1]{{\left\lvert  #1  \right\rvert}}
\title[Simplicity of Lyapunov spectrum]
{Simplicity of Lyapunov spectrum for linear cocycles  over non-uniformly hyperbolic systems}
\author[Lucas Backes]{Lucas Backes}
\address{Lucas Backes, Departamento de Matem\'atica, Universidade Federal do Rio Grande do Sul, Av. Bento Gon\c{c}alves 9500, CEP 91509-900, Porto Alegre, RS, Brazil.}
\email{lhbackes@impa.br }
\author[Mauricio Poletti]{Mauricio Poletti}
\address{Mauricio Poletti, LAGA -- Universit\'e Paris 13, 99 Av. Jean-Baptiste Cl\'ement, 93430 Villetaneus, France.}
\email{mpoletti@impa.br}
\author[Paulo Varandas]{Paulo Varandas \newline With an appendix by Yuri Lima}
\address{Paulo Varandas, Departamento de Matem\'atica, Universidade Federal da Bahia\\
Av. Ademar de Barros s/n, 40170-110 Salvador, Brazil}
\email{paulo.varandas@ufba.br}
\address{Yuri Lima, Departamento de Matem\'atica, Universidade Federal do Cear\'a (UFC), Campus do Pici,
Bloco 914, CEP 60455-760. Fortaleza -- CE, Brasil}
\email{yurilima@gmail.com}
\subjclass[2010]{37H15,37D30,37D25}
\keywords{Lyapunov exponents, non-uniform hyperbolicity, linear cocyles}
\begin{document}

\begin{abstract}
We prove that generic fiber-bunched and H\"older continuous \mbox{linear} cocycles over a non-uniformly hyperbolic system endowed with a $u$-Gibbs \mbox{measure} have simple Lyapunov spectrum. This gives an affirmative answer to a conjecture proposed by Viana in the context of fiber-bunched cocycles. 
\end{abstract}

\maketitle


\section{Introduction}

The notion of (uniform) hyperbolicity was introduced in the context of dynamical systems by Smale \cite{Sm67}, and since then has played a major rule in this area of research. This notion is expressed in terms of (uniform) rates of contraction and expansion by the dynamics along complementary directions. At first, it was conjectured that 
uniform hyperbolicity is quite frequent among all dynamical systems. Newhouse then proved that this was not the case: he exhibited a $C^2$-open set of diffeomorphisms on the $2$-sphere where none of its elements are
hyperbolic \cite{New70}. 

In order to describe the majority of dynamical systems, weaker notions of hyperbolicity were introduced and have been intensively studied. These include partially hyperbolic and non-uniformly hyperbolic dynamical systems. The notion of non-uniform hyperbolicity is defined in terms of \textit{Lyapunov exponents}: a diffeomorphism
is non-uniformly hyperbolic if it has no zero Lyapunov exponents. Lyapunov exponent measure the \textit{asymptotic} rates of contraction and expansion along directions and are one of the most fundamental notions in dynamical systems. They have received a great deal of attention in the last decades, and they are the focus of the present work. Among many interesting questions that can be posed about them, we cite the following:
\begin{enumerate}[$\circ$]
\item What are the regularity properties of Lyapunov exponents?
\item How frequently a system has \emph{at least one} non-zero Lyapunov exponent?
\item How frequently a system has Lyapunov exponents  \emph{all different from zero}?
\item How frequently a system has Lyapunov exponents \textit{all different}?
\end{enumerate}
The context of \emph{linear cocycles} has provided a fruitful playground for addressing these questions,
since it allows to detach the underlying dynamics from an action, induced by it, on a vector space.
In this context the abundance of non-uniform hyperbolicity, foreseen since the pioneering works of Furstenberg \cite{Fur63}, Guivarc'h and Raugi \cite{GR86}, Golâdsheid and Margulis \cite{GM89}
on random i.i.d. products of matrices, was later extended by Bonatti and Viana \cite{BoV04}, Viana~\cite{Almost},
Avila and Viana \cite{AvV1} to include a much broader class of (H\"older continuous) cocycles over hyperbolic maps. More recently, Matheus, M\"oller and Yoccozz \cite{MMY15} considered the Kontsevich-Zorich cocycle, Poletti and Viana \cite{PoV16} established a criterion for simplicity of the Lyapunov spectrum for cocycles over partially hyperbolic maps, and Bessa et al ~\cite{BBCMVX} considered the top Lyapunov exponent for linear cocycles on semisimple Lie groups over non-uniformly hyperbolic diffeomorphisms.
One should mention that the coincidence of all Lyapunov exponents (in opposition to the abundance of non-uniform hyperbolicity) occurs for generic continuous cocycles over ergodic automorphisms,
as proved by Bochi \cite{Boc02}. In this note, we are interested in the last question:
how frequently a system has Lyapunov exponents \textit{all different}? When this happens, we say
that the Lyapunov spectrum is \textit{simple}. Our main result is the following.

\begin{theorem}
Generic fiber-bunched linear cocycles over a non-uniformly hyperbolic system have simple Lyapunov spectrum.
\end{theorem}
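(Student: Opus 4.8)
Here ``generic'' is to be read as \emph{residual} in the space of $\alpha$-H\"older fiber-bunched $\GL$-cocycles over the (fixed) non-uniformly hyperbolic system, with the uniform topology, and ``simple Lyapunov spectrum'' refers to the Lyapunov exponents with respect to the fixed $u$-Gibbs measure $\mu$. The plan is to transport the \emph{pinching and twisting} mechanism of Bonatti--Viana \cite{BoV04} and Avila--Viana \cite{AvV1} from the uniformly hyperbolic to the non-uniformly hyperbolic setting. The first step is to reduce the whole statement to producing, from the dynamics $(f,\mu)$ alone, a hyperbolic periodic point $p$ and a transverse homoclinic point $q$ of $f$, both inside a Pesin block, such that the set $\mathcal{G}$ of fiber-bunched cocycles that are pinching and twisting relative to $(p,q)$ is \emph{open and dense} and satisfies $\mathcal{G}\subseteq\{A:\ A\text{ has simple spectrum}\}$; since such a set is in particular residual, this proves the theorem. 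Openness of $\mathcal{G}$ is the routine part: $p$ is a hyperbolic periodic point of the \emph{fixed} map $f$ (hence unaffected by perturbing the cocycle), and because $p$ and $q$ lie in a Pesin block the stable and unstable holonomies of every fiber-bunched cocycle are defined along the relevant local leaves and vary continuously with the cocycle, so that ``$A^{N}(p)$ has $d$ distinct eigenvalue moduli'' and ``the holonomy loop at $p$ moves the eigenbasis into general position'' are finitely many open conditions.

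For density, fix a fiber-bunched $A$. Pesin theory provides Lyapunov charts over a full-$\mu$-measure set stratified into Pesin blocks, on which the stable/unstable manifolds, their local product structure, and the fiber-bunching estimates are uniform; hence the stable and unstable holonomies $H^{s},H^{u}$ of $A$ are well defined and depend H\"older-continuously on the base point along stable and unstable leaves, and Katok's closing and shadowing lemmas, applied inside a Pesin block of positive measure, furnish a hyperbolic periodic point $p$ of some period $N$ together with $q\in W^{s}_{\mathrm{loc}}(p)\cap W^{u}_{\mathrm{loc}}(p)$, both depending only on $(f,\mu)$. One then perturbs $A$ by two successive modifications, supported in disjoint small neighborhoods of the finite orbit of $p$ and of the point $q$ (each of zero $\mu$-measure, so fiber-bunching persists): first, make $A^{N}(p)$ have $d$ pairwise distinct eigenvalue moduli, hence diagonalizable with an eigenbasis $e_{1},\dots,e_{d}$ --- this is \emph{pinching}, and it makes every exterior power $\wedge^{k}A^{N}(p)$ have a simple spectrum as well; second, make the holonomy loop $\psi_{p}:=H^{s}_{q,p}\circ H^{u}_{p,q}$ move that eigenbasis into general position, i.e.\ leave no nontrivial coordinate subspace $\bigoplus_{i\in I}\field e_{i}$ invariant --- this is \emph{twisting}, and it renders every $\wedge^{k}A$ twisting as well. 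Each requirement is an open and dense constraint on a finite-dimensional family of matrices, hence achievable with an arbitrarily small change of $A$; the resulting cocycle lies in $\mathcal{G}$.

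It remains to establish the criterion $\mathcal{G}\subseteq\{\text{simple spectrum}\}$. For this it is enough to prove the implication ``pinching $+$ twisting $\Rightarrow\lambda_{1}>\lambda_{2}$'' and apply it to $\wedge^{k}A$ for $k=1,\dots,d-1$: indeed $\lambda_{1}(\wedge^{k}A)>\lambda_{2}(\wedge^{k}A)$ is equivalent to $\lambda_{k}(A)>\lambda_{k+1}(A)$, and for $A\in\mathcal{G}$ each $\wedge^{k}A$ is again pinching and twisting and --- although it need not itself be fiber-bunched --- inherits from $A$ H\"older stable and unstable holonomies along leaves, which is all the argument below uses. So suppose $A$ is pinching and twisting with $\lambda_{1}(A)=\lambda_{2}(A)$. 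Then the top Oseledets subspace $E^{1}_{x}$ has dimension $\ge 2$, the projectivized cocycle $\hat{A}$ restricted to $\mathbb{P}(E^{1})$ has vanishing fibered Lyapunov exponent, and there is an $\hat{A}$-invariant probability $m$ on the projectivized bundle, supported on $\mathbb{P}(E^{1})$ and projecting to $\mu$. Here the $u$-Gibbs hypothesis enters through the invariance principle of Avila--Viana in its non-uniform form: absolute continuity of the conditional measures of $\mu$ along unstable Pesin manifolds forces the disintegration $\{m_{x}\}$ of $m$ to be invariant under the unstable holonomies of $\hat{A}$, and vanishing of the fibered exponent then forces it to be invariant under the stable holonomies, so that $x\mapsto m_{x}$ is locally constant along both laminations. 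Evaluating at $p$ gives a probability $m_{p}$ on $\mathbb{P}(\field^{d})$ invariant under $A^{N}(p)$; by pinching, $m_{p}$ is a convex combination of Diracs at $[e_{1}],\dots,[e_{d}]$; and transporting $m_{p}$ along the unstable leaf to $q$ and back along the stable leaf shows it is $\psi_{p}$-invariant, whence some coordinate subspace is $\psi_{p}$-invariant --- contradicting twisting. Therefore $\lambda_{1}(A)>\lambda_{2}(A)$, and running this over all exterior powers shows $A$ has simple spectrum.

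I expect the principal obstacle to be the invariance principle itself in this measurable, non-uniform category. Over uniformly hyperbolic bases the argument rests on globally uniform bounded distortion, a global local product structure, and uniformly H\"older holonomies, whereas here all three are available only on Pesin blocks with measurable, a priori unbounded, constants; it is precisely the $u$-Gibbs property of $\mu$ that must be leveraged to carry out the Rokhlin disintegration along the unstable lamination and the martingale / Lebesgue-density estimates identifying $m$ as a doubly holonomy-invariant measure, and obtaining the stable-side invariance --- where no analogue of ``$s$-Gibbs'' is available --- is the most delicate point, presumably forced by the vanishing of the fibered exponent together with forward contraction along stable leaves. A secondary technical burden is the bookkeeping needed to guarantee that the periodic and homoclinic points delivered by Katok's lemma lie in the full-measure sets on which the holonomies are defined and along which the path through $q$ genuinely connects the conditional measures $m_{p}$ of the two laminations.
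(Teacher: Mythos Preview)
Your strategy differs from the paper's in one decisive respect, and that difference is exactly what resolves the obstacle you flag at the end. You propose to work directly on $M$ with Pesin theory: find $p$ and $q$ in a Pesin block via Katok's closing lemma, define pinching and twisting there, and then prove the implication ``pinching $+$ twisting $\Rightarrow$ simple spectrum'' by a \emph{non-uniform} invariance principle, using the $u$-Gibbs property of $\mu$ to control the disintegration of the lifted projective measure along unstable Pesin leaves. You correctly identify this last step as the principal obstacle, and indeed you do not prove it; as you say, bounded distortion, local product structure and H\"older holonomies are available only on Pesin blocks with measurably varying constants, and the stable side (where no $s$-Gibbs assumption is available) is genuinely delicate.

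The paper bypasses this entirely. It lifts $(f,\mu)$ to a countable topological Markov shift $(\hat\Sigma,\hat\sigma,\hat\mu)$ via the Sarig/Ben Ovadia symbolic coding of non-uniformly hyperbolic diffeomorphisms, and shows (this is where the $u$-Gibbs hypothesis is actually spent) that the lifted measure $\hat\mu$ has \emph{continuous product structure} on cylinders, with density bounded away from zero and infinity. The lifted cocycle $\hat A=A\circ\hat\pi$ is fiber-bunched for the shift metric, hence admits honest holonomies. At this point the Avila--Viana criterion \cite{AvV1} applies \emph{verbatim}: no new invariance principle is needed, because on a Markov shift with continuous product structure the criterion is already a theorem. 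Open-and-dense pinching and twisting is then arranged by perturbing $A$ near $\hat\pi(\hat p)$ and $\hat\pi(\hat z)$; the finite-to-one property of $\hat\pi$ (Ben Ovadia's bound $\varphi(u,v)$) is used to control periods under projection. So the trade-off is: your route would require proving a genuinely new non-uniform invariance principle, while the paper's route replaces that with the (still nontrivial, but more concrete) verification that $u$-Gibbs measures lift to measures with continuous product structure on the Sarig extension.

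Two smaller points where your outline would need repair even granting the invariance principle. First, in the real case $\field=\real$ a single perturbation at $p$ does not in general achieve pinching, because $A^N(p)$ may have complex conjugate eigenvalue pairs that no small perturbation destroys; the paper handles this (following \cite{BoV04}) by passing to nearby periodic points $x_n$ of long period inside a horseshoe containing $p$ and the homoclinic orbit, rotating in the offending $2$-plane, and using growth of rotation number --- here again the finite-to-one bound on $\hat\pi$ is essential to compare $\mathrm{per}(\hat x_n)$ with $\mathrm{per}(\hat\pi(\hat x_n))$. Your sketch fixes $(p,q)$ once and for all ``depending only on $(f,\mu)$'' and does not address this. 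Second, your transition map $\psi_p=H^s_{q,p}\circ H^u_{p,q}$ presumes $q\in W^s_{\mathrm{loc}}(p)\cap W^u_{\mathrm{loc}}(p)$; in the symbolic picture one has $\hat z\in W^u_{\mathrm{loc}}(\hat p)$ with $\hat\sigma^l(\hat z)\in W^s_{\mathrm{loc}}(\hat p)$, and the transition map includes the factor $\hat A^l(\hat z)$. This matters for the perturbation argument, since it is precisely this factor that one modifies near $z=\hat\pi(\hat z)$ without disturbing the holonomies.
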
 

Since we need some preliminary definitions, the precise statement of our result is at the end of Section~\ref{definitions} after some preliminary definitions. It provides an affirmative answer to a
conjecture of Viana in \cite{Almost}, p. 648, in the context of fiber-bunched cocycles:
the set of linear cocycles whose Lyapunov exponents are all different contains an open and dense set
of fiber-bunched cocycles. We remark that other similar questions have also attracted the attention of 
the community and have already been answered in some specific contexts, see
\cite{BBB, BP, Be2,BeVar, BGV03, BoV04, DK_b,  Po16,  LLE} and references therein.
We also remark that for the \textit{dynamical cocycle}, corresponding to the cocycle
$Df$ over the diffeomorphism $f$, fewer results are known.

\section{Definitions and Statements} \label{definitions}

In this section we introduce some preliminary notions and provide the precise statement of our main result.
Let $M$ be a closed smooth manifold, $f:M\to M$ a $C^{1+\beta}$ diffeomorphism and $\mu$ an ergodic $f$-invariant measure. 

\subsection{Linear cocycles and Lyapunov exponents} Given an integer $d\ge 1$ and $\mathbb K= \mathbb R$
or $\mathbb C$, the \emph{linear cocycle generated by a matrix-valued map $A:M\rightarrow \GL$ over $f$} is the (invertible) map $F_A:M\times \field ^d\rightarrow M\times \field ^d$ defined by
\begin{equation}
\nonumber F_A\left(x,v\right)=\left(f(x),A(x)v\right).
\end{equation}
Its iterates are ${F}^n_A\left(x,v\right)=\left(f^n(x),A^n(x)v\right)$, where
\begin{equation*}\label{def:cocycles}
A^n(x)=
\left\{
	\begin{array}{ll}
		A(f^{n-1}(x))\cdots A(f(x))A(x)  & \mbox{if } n>0 \\
		{\rm Id} & \mbox{if } n=0 \\
		A(f^{n}(x))^{-1}\cdots A(f^{-1}(x))^{-1}& \mbox{if } n<0 .\\
	\end{array}
\right.
\end{equation*}
Sometimes we denote this cocycle by $(f,A)$ or simply by $A$, when there is no risk of ambiguity. 
The \emph{projectivized cocycle} $f_A : M\times \mathbb{P}^{d-1}(\field) \rightarrow M\times \mathbb{P}^{d-1}(\field)$ is
$$
f_A\left(x,v\right)=\left(f(x),\frac{A(x)v}{\|A(x)v\|}\right).
$$
A natural example of linear cocycle is given by the \textit{derivative cocycle}:
the cocycle generated by $A(x)=Df(x)$ over $f$.

When $\log\norm{A}$ and $\log\norm{ A^{-1}}$ are both integrable, 
a famous theorem of Oseledets~\cite{Ose68} guarantees the existence of a full $\mu$-measure set $\mathcal{R} (\mu)\subset M$, whose points are called \emph{$\mu$-regular}, such that for every $x\in \mathcal{R} (\mu)$ there exist real numbers $\lambda_1 \left(A, x\right)>\cdots >\lambda_k\left(A, x\right)$ and a direct sum decomposition $\field ^d=E^{1,A}_{x}\oplus \cdots \oplus E^{k,A}_{x}$ such that
\begin{displaymath}
A(x)E^{i,A}_{x}=E^{i,A}_{f(x)}  \textrm{  and  }
\lambda _i(A, x) =\lim _{n\rightarrow \infty} \dfrac{1}{n}\log \| A^n(x)v\| 
\end{displaymath}
for every non-zero $v\in E^{i,A}_{x}$ and $1\leq i \leq k$. Moreover, since $\mu$ is ergodic, the \textit{Lyapunov exponents} $\lambda _i(A, x)$ are constant on a full $\mu$-measure subset of $M$ (and thus we denote it just by $\lambda_i(A,\mu)$) as well as the dimensions of the \textit{Oseledets subspaces} $E^{i,A}_{x}$.
The dimension of $E^{i,A}_{x}$ is called the \emph{multiplicity} of $\lambda_i(A,\mu)$. We say that the cocycle $(f,A)$ has \textit{simple Lyapunov spectrum} with respect to the measure $\mu$ if every Lyapunov exponent has multiplicity one. This means that $A$ has $d$ distinct Lyapunov exponents.

\subsection{Non-uniformly hyperbolic systems and Gibbs states}\label{sec: non unif hyp}

An $f$-invariant measure $\mu$ is said to be \textit{hyperbolic} if all Lyapunov exponents $\{\lambda _i(Df, \mu) \}_{i=1}^{k}$ are non-zero. When this happens, $(f,\mu)$ is called \textit{non-uniformly hyperbolic}. 
Given $\chi>0$, $\mu$ is called \textit{$\chi$-hyperbolic} if 
$0< \chi < \min \{ | \lambda _i(Df, \mu) | \colon 1\leq i\leq k\}$.
Non-uniform hyperbolicity implies the existence of a very rich geometric structure of the dynamics of $f$,
given by stable and unstable manifolds in the sense of Pesin (see \cite{BaP07}):
there exists a full $\mu$-measure set $H(\mu) \subset M$
so that through every point $x\in H(\mu)$ there exist $C^1$ embedded disks $W^s_{\text{\rm loc}}(x)$ and $W^u_{\text{\rm loc}}(x)$, called \textit{local stable and unstable sets} at $x$, such that
\begin{enumerate}[$\circ$]
\item $W^s_{\text{\rm loc}}(x)$ is tangent to $E^s_x$ and $W^u_{\text{\rm loc}}(x)$ is tangent to $E^{u}_x$;
\item given $0<\tau _x <\min _{1\leq i\leq k}| \lambda _i(Df,\mu)|$ there exists $C_x>0$ such that 
$$
\left\{
\begin{array}{rl}
d(f^n(y),f^n(z))\leq C_xe^{-\tau _x n}d(y,z) & ,\forall y,z\in W^s_{\text{\rm loc}}(x),\forall n\geq 0,\\
d(f^{-n}(y),f^{-n}(z))\leq C_xe^{-\tau _x n}d(y,z) & ,\forall y,z\in W^u_{\text{\rm loc}}(x),\forall n\geq 0;
\end{array}
\right.
$$
\item $f(W^s_{\text{\rm loc}}(x))\subset W^s_{\text{\rm loc}}(f(x))$ and $f(W^u_{\text{\rm loc}}(x)) \supset W^u_{\text{\rm loc}}(f(x))$;
\item $W^s(x)=\bigcup _{n=0}^{\infty} f^{-n}(W^s_{\text{\rm loc}}(f^n(x)))$ and $W^u(x)=\bigcup _{n=0}^{\infty} f^{n}(W^u_{\text{\rm loc}}(f^{-n}(x)))$.
\end{enumerate}
Moreover, $W^s_{\text{\rm loc}}(x)$ and $W^u_{\text{\rm loc}}(x)$ depend measurably on $x$, as $C^1$ embedded disks, as well as the constants $\tau _x$ and $C_x$. By Lusin's theorem, we may find compact \textit{hyperbolic blocks} $\mathcal{H}(C,\tau)$, whose measure can be made arbitrarily close to 1 by increasing $C$ and decreasing $\tau$, such that in $\mathcal{H}(C,\tau)$ the sets $W^s_{\text{\rm loc}}(x)$ and $W^u_{\text{\rm loc}}(x)$ vary continuously,
$\tau _x>\tau \text{ and } C_x<C$.
In particular, the sizes of $W^s_{\text{\rm loc}}(x)$ and $W^u_{\text{\rm loc}}(x)$ are uniformly bounded from zero on $\mathcal{H}(C,\tau)$, as well as the angles between these disks.
The drawback on this argumentation is that  $\mathcal{H}(C,\tau)$ is in general not $f$-invariant.


\begin{definition}
An $f$-invariant measure $\mu$ is called a \emph{$u$-Gibbs state} (respectively, an \emph{$s$-Gibbs state}) if its disintegrations along unstable (respectively, stable) manifolds are absolutely continuous with respect to the Lebesgue measure.
\end{definition}

In the literature, $u$-states are also called \emph{SRB measures}. The simplest example
is given by volume preserving diffeomorphisms: the Lebesgue measure is both an $s$-Gibbs and
a $u$-Gibbs state.
The class of $u$-Gibbs measures is physically relevant: 
physical measures\footnote{An ergodic invariant probability measure is a \textit{physical measure}
if its basin of attraction has positive Lebesgue measure.}
of uniformly hyperbolic and of many partially hyperbolic attractors are $u$-Gibbs states, see e.g. 
\cite{AL,Bo75, PeS83} and references therein.
By \cite{Le84a,LY,LS82}, an $f$-invariant measure $\mu$ is a $u$-Gibbs state iff it
satisfies the \textit{entropy formula}
$$
h_\mu(f)=\int \sum_{\lambda_i(x)>0} \lambda_i(x)d\mu.
$$ 
Analogously, $\mu$ is a $s$-Gibbs state iff
$h_\mu(f)=\int \sum_{\lambda_i(x)<0} -\lambda_i(x)d\mu.$

\subsection{Fiber-bunched cocycles}
Given $r\in \mathbb N$ and $\alpha \in [0,1]$, let $C^{r,\alpha}(M, GL(d,\mathbb K))$ denote the set of 
$C^{r}$ maps $A: M \to GL(d,\mathbb K)$ such that $D^r A$ is $\alpha$-H\"older continuous.
Endow $C^{r,\alpha}(M, GL(d,\mathbb K))$ with the norm
$$
\|A\|_{r,\alpha}
	:= \max_{0\le j\le r} \sup_{x\in M} \| D^j A (x)\| + \sup_{x,y\in M\atop{x\neq y}} \frac{\| D^rA(x) -D^rA(y) \|}{d(x,y)^\alpha}.
$$
For $\chi>0$, let $\mu$ be an ergodic $\chi$-hyperbolic measure. 
We say that the cocycle $A\in C^{r,\alpha}(M, GL(d,\mathbb K))$  over 
$(f,\mu)$ is \emph{$\frac{\chi}{2}$-fiber-bunched} if there are constants $C>0$ and $\theta \in (0,1)$ such that
\begin{equation}\label{def:fiberbunching}
\| A^n(x)\| \| A^{n}(x)^{-1}\| (e^{-\frac{\chi}{2}})^{| n|  \alpha}\leq C \theta ^{| n|}
\end{equation}
for every $x\in M$ and $n\in \mathbb{Z}$. 
A simple remark is that the previous notion does not depend on the invariant measure $\mu$ but on the hyperbolicity constant $\chi>0$.  
Let $\mathcal B_\chi^{r,\alpha}(M)$ denote the set of $C^{r,\alpha}$ cocycles that are
$\frac\chi{2}$-fiber-bunched over $(f,\mu)$. It is a $C^0$-open subset of $C^{r,\alpha}(M, GL(d,\mathbb K))$.

\subsection{Main theorem} The main result of this work is that, for $u$-Gibbs measures,
a typical fiber-bunched cocycle has simple Lyapunov spectrum. More precisely, we have the following theorem.

\begin{ltheorem} \label{theo: main}
Let $f:M\to M$ be a $C^{1+\beta}$ diffeomorphism of a closed smooth Riemannian manifold, and let
$\mu$ be an ergodic $\chi$-hyperbolic $u$-Gibbs measure. For each $r\geq 0$ and $\alpha \geq 0$ with
$r+\alpha>0$, there exists an open and dense subset $\mathcal O \subset \mathcal B_\chi^{r,\alpha}(M)$ 
such that for any $A\in \mathcal O$, the cocycle $(f,A)$ has simple Lyapunov spectrum. 
\end{ltheorem}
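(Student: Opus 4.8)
The plan is to follow the now-classical "invariance principle + criterion for simplicity" strategy, adapted to the non-uniformly hyperbolic setting. The two key ingredients are: (a) a dichotomy à la Ledrappier/Avila–Viana/Bonatti–Viana stating that if the Lyapunov spectrum is \emph{not} simple along some splitting of the fibre bundle, then the conditional measures of any $u$-state (an $F_A$-invariant measure projecting to $\mu$ whose disintegration along unstable manifolds is invariant under unstable holonomies) on the relevant Grassmannian bundle are invariant under the stable and unstable holonomies of the cocycle; and (b) the observation that invariance under holonomies forces strong rigidity — the conditionals are ``constant'' along local stable/unstable sets — which a small perturbation of $A$ supported near a periodic point of $f$ inside a hyperbolic block can destroy. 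Thus one shows that the non-simple cocycles are contained in the complement of an open dense set.

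More precisely, I would proceed as follows. \emph{Step 1 (Setup on blocks).} Fix a hyperbolic block $\mathcal{H}=\mathcal{H}(C,\tau)$ with $\mu(\mathcal{H})>1/2$ and $\tau<\chi$; the fiber-bunching hypothesis \eqref{def:fiberbunching} guarantees that stable and unstable holonomies $H^{s}_{x,y}, H^{u}_{x,y}$ of the cocycle $A$ exist, are Hölder in $(x,y)$, vary continuously with $A$ in $\mathcal{B}^{r,\alpha}_\chi(M)$, and satisfy the usual cocycle and equivariance relations along local stable/unstable Pesin manifolds, uniformly on $\mathcal{H}$ (this is where one uses that the block is compact with uniformly sized, continuously varying laminations). \emph{Step 2 (Invariance principle).} Suppose $\lambda_i(A,\mu)=\lambda_{i+1}(A,\mu)$ for some $i$, or more generally that some Oseledets subbundle has dimension $\ge 2$. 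Considering the induced cocycle on the appropriate Grassmannian bundle and using the entropy/Furstenberg-type argument of the invariance principle (Ledrappier, Avila–Viana, as in \cite{Almost}), deduce that there is an $F_A$-invariant probability $m$ projecting to $\mu$ whose disintegrations $\{m_x\}$ along unstable plaques are invariant under unstable holonomies, and simultaneously there is one invariant under stable holonomies; combining them on a positive-measure subset of a block one gets conditionals invariant under \emph{both} families of holonomies on $\mathcal{H}$. \emph{Step 3 (Rigidity at periodic points).} Pick a periodic point $p\in \mathcal{H}$ with $f^{N}(p)=p$ returning to $\mathcal{H}$ (such points exist by Katok's closing lemma for the block, or one works with a returning Pesin orbit). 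Double holonomy-invariance forces $m_p$ to be invariant under the ``holonomy loop'' transformation obtained by going out along the unstable manifold, around via $A^{N}(p)$, and back along the stable manifold — i.e. under a specific element $\Phi(p,A)\in GL(d,\mathbb{K})$ built from $A^{N}(p)$ and the holonomies. \emph{Step 4 (Perturbation).} Perform a $C^{r,\alpha}$-small, $C^{0}$-small perturbation of $A$ supported in a small neighbourhood of $p$ (small enough to stay in the open set $\mathcal{B}^{r,\alpha}_\chi(M)$, by $C^{0}$-openness of fiber-bunching) so that the new twisting map $\Phi(p,\tilde A)$ has all eigenvalues of distinct moduli and no invariant proper subspace shared with the previous one; then no probability on $\mathbb{P}^{d-1}$ (or on the Grassmannian) can be simultaneously $\Phi(p,\tilde A)$-invariant and have the required holonomy-invariance, contradicting Step 3. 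Hence $\tilde A$ has simple spectrum, and by continuity of holonomies and of the obstruction this persists on an open set; taking a countable dense family of such perturbations and intersecting yields the open dense $\mathcal{O}$.

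The main obstacle is \emph{Step 2}: making the invariance principle work when the base dynamics is only non-uniformly hyperbolic and the laminations are merely measurable (continuous only on non-invariant blocks). One cannot directly invoke the uniformly hyperbolic statements; instead I expect to need an induced/accelerated cocycle over a first-return map to a hyperbolic block — which is uniformly hyperbolic, with local product structure — together with a check that the $u$-Gibbs property of $\mu$ passes to this induced system (so that the disintegration is genuinely along unstable plaques and the entropy formula of the Remark after the $u$-Gibbs definition applies), and then a transfer of the conclusion back to the original system. A secondary technical point is ensuring the perturbation in Step 4 genuinely changes $\Phi(p,\cdot)$: since the holonomies themselves depend on $A$, one must localize the perturbation away from the plaques used to define $H^{s},H^{u}$ near $p$, or track the (Hölder, hence controlled) variation of the holonomies under the perturbation; this is routine but must be done carefully to preserve openness.
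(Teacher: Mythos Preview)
Your strategy is plausible but takes a genuinely different route from the paper. The paper does \emph{not} attempt to run the invariance principle directly over the non-uniformly hyperbolic base, nor does it induce on a Pesin block. Instead it invokes the Sarig/Ben~Ovadia countable Markov coding (Theorem~\ref{theo: Markov partitions}) to obtain a semiconjugacy $\hat\pi:\hat\Sigma\to M$ from a locally compact topological Markov shift, lifts both the measure and the cocycle to $\hat\Sigma$, and then applies the Avila--Viana simplicity criterion \cite{AvV1} as a black box on the symbolic side. The technical heart of the paper (Proposition~\ref{p.semiconj}, in particular Proposition~\ref{p.gibbsproduct}) is verifying that the lift $\hat\mu$ of a $u$-Gibbs measure has \emph{continuous product structure} on cylinders, which is exactly the hypothesis the Avila--Viana criterion needs. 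Density of pinching and twisting is then obtained by perturbing $A$ on $M$ near the projections $\hat\pi(\hat p)$, $\hat\pi(\hat z)$ of a symbolic periodic/homoclinic pair (Proposition~\ref{p.pintwistdense}), and openness from Lipschitz continuity of $A\mapsto A\circ\hat\pi$. This sidesteps precisely the obstacle you identify in Step~2: on $\hat\Sigma$ the local stable/unstable sets, the holonomies, and the product structure are all honest and uniform, so no adaptation of the invariance principle to measurable laminations is required.

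Your alternative via induction on a block is conceivable, but besides the Step~2 issue you flag there is a concrete gap in Steps~3--4. The ``holonomy loop'' $\Phi(p,A)$ you describe, built from $A^N(p)$ and holonomies at the single periodic point $p$, is not the right object: every element of $GL(d,\mathbb{K})$ has invariant probability measures on $\mathbb{P}^{d-1}$ (Dirac masses on eigendirections), so one cannot derive a contradiction from $\Phi(p,\tilde A)$-invariance alone. The actual obstruction requires \emph{two} ingredients, a periodic point $\hat p$ (pinching: distinct eigenvalue moduli of $A^{q}(p)$ force any $su$-invariant conditional to be supported on sums of eigenspaces) and a separate homoclinic point $\hat z$ giving a transition map $\psi_{\hat p,\hat z}=H^s_{\hat\sigma^l(\hat z),\hat p}A^l(z)H^u_{\hat p,\hat z}$ (twisting: $\psi$ maps no such sum into another), and the contradiction comes from the interaction of the two. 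Your text conflates these. Once corrected, your Step~4 becomes exactly the pinching/twisting perturbation of \cite{BoV04,AvV1}; but to conclude simplicity from it you still need either the Avila--Viana criterion (which needs product structure of the base measure, bringing you back to the paper's symbolic reduction) or a full invariance-principle argument adapted to the non-uniform setting, which is substantially more work than the paper's route.
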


It is clear that Theorem \ref{theo: main} above also applies for $s$-Gibbs measures. Indeed,
while on the one hand $\mu$ is $s$-Gibbs for $f$ iff it is $u$-Gibbs for $f^{-1}$, on the other hand a cocycle is fiber-bunched for $(f,\mu)$ iff it is fiber-bunched for $(f^{-1},\mu)$. Moreover, similar calculations to those of \cite{BoV04} imply that the complement $\mathcal B_\chi^{r,\alpha}(M) \setminus \mathcal O$ has infinite codimension, that is, it is locally contained in finite unions of closed submanifolds with arbitrarily large codimension. Theorem \ref{theo: main} should be compared to \cite{Almost}, where the author proved that an open and dense set of $C^{r,\alpha}$ cocycles over an ergodic $\chi$-hyperbolic measure with local product structure such that
each of the cocycles has at least one positive Lyapunov exponent.

At this point it would be interesting to exhibit non-trivial examples of open sets of cocycles which are not fiber-bunched and yet have simple Lyapunov spectrum. By non-trivial examples we mean, for instance, not having dominated decomposition.

The proof of Theorem \ref{theo: main} has two main ingredients: the first is the simplicity criterion
established by Avila and Viana \cite{AvV1}, and the second is the symbolic description of non-uniformly
hyperbolic diffeomorphisms given by Sarig \cite{Sa13} and Ben Ovadia \cite{Ova16}. 

We would also like to mention that our result can be extended to the case when the base dynamics is the class of billiards considered by Lima and Matheus in \cite{LM16}, since their symbolic description 
is the same as the ones given in \cite{Sa13,Ova16}.

\section{Preliminaries}

This section recalls some notions and results that will be used in the proof of Theorem \ref{theo: main}.

\subsection{Topological Markov shifts}\label{sec: Markov partitions}
The notation in the section is the same as in the Appendix.
Let $\mathfs G$ be a directed graph with a countable vertex set $\cV$, such that every vertex has at least one
ingoing and one outgoing edge. The \textit{topological Markov shift} associated to $\mathfs{G}$ is the 
pair $(\hSigma,\hsigma)$ where
$$\hSigma=\hSigma(\mathfs{G}):=\{\hx=\{R_i\}_{i\in\mathbb{Z}} \in{\cV}^\mathbb{Z}:
R_i \rightarrow R_{i+1},\forall i\in\mathbb{Z}\}$$ 
and $\hsigma:\hSigma\rightarrow\hSigma$ is the left-shift,
$\hsigma[\{R_i\}_{i\in\mathbb{Z}}]=\{R_{i+1}\}_{i\in\mathbb{Z}}$. Define a metric on $\Sigma$ by 
\begin{equation}\label{eq: adapted metric}
d[\{R_i\}_{i\in\mathbb{Z}},\{S_i\}_{i\in\mathbb{Z}}]:=\exp\left[-\tfrac{\chi}{2}\min\{| n| :\; R_n\neq S_n\}\right].
\end{equation}
With this metric, $\hSigma$ is a complete separable metric space and 
$\hsigma$ is a hyperbolic homeomorphism (with hyperbolicity constant $e^{\frac{\chi}{2}}$).
Furthermore, $\hSigma$ is compact iff $\mathfs{G}$ is finite, and it is
\emph{locally compact} iff every vertex of $\mathfs{G}$ has finite ingoing and outgoing degrees.
We define
$$\hSigma^\#:=\left\{\{R_i\}_{i\in\mathbb{Z}}\in\hSigma:\exists \; R,S\in\cV,\exists \; n_k, m_k\uparrow\infty\text{ s.t. } R_{n_k}=R\text{ and }R_{-m_k}=S\right\}.$$
This set contains all periodic point of $\hsigma$. Also, by the Poincar\'e recurrence theorem,
every $\hsigma$-invariant probability measure is supported on $\hSigma^\#$. 

The next result plays an important role in our proof. It was first established by Sarig for surface diffeomorphisms 
\cite{Sa13} and later generalized to any dimension by Ben Ovadia \cite{Ova16}.

\begin{theorem}[\cite{Sa13,Ova16}]\label{theo: Markov partitions} 
Let $f:M\to M$ be a $C^{1+\beta}$ diffeomorphism. For each $\chi>0$, there exists a locally compact topological Markov shift $(\hSigma,\hsigma)$ and a H\"older continuous map $\hpi:\hSigma\rightarrow M$ such that: \begin{enumerate}[{\rm (1)}]
    \item $\hpi\circ\hsigma=f\circ\hpi$.
    \item $\hpi[\hSigma^\#]$ has full measure for every $\chi$-hyperbolic measure.
    \item Every $x\in \hpi_\chi[\hSigma^\#]$ has finitely many pre-images in $\hSigma^\#$. More specifically:
    there is $\varphi:\mathfs R\to\mathbb N$ such that if $x=\hpi(\hx)$ with $R_n=R$ for infinitely many
    $n>0$ and $R_n=S$ for infinitely many $n<0$ then $\#\{\hy\in \hSigma^\#:\hpi(\hy)=x\}\leq \varphi(R)\varphi(S)$.
    \item For every $\chi$-hyperbolic measure $\mu$, there exists a $\hsigma$-invariant measure $\hmu$
    such that $\hpi_*\hmu=\mu$.
    \end{enumerate}
\end{theorem}

In Appendix \ref{appendix} we present a brief sketch of the proof of Theorem \ref{theo: Markov partitions}.
The proof not only provides the existence $(\hSigma,\hsigma)$ and $\hpi$, but it also implies
some additional properties of these objects, such as Lemma \ref{l.product} below. That is why
we decided, to maintain consistence and simplify the understanding of the reader,
to use the notations of \cite{Sa13,LS16}.

\begin{remark} \label{rem: pi is lipschitz}
It follows from \cite[Theorem 1.3.21]{Ova16} that the H\"older exponent of 
$\hpi$ is $ \frac{\chi}{2}$ with respect to the canonical metric
$d'(\hx,\hy)=\exp\left[-\min\lbrace \abs{n}:R_n\neq S_n\rbrace \right]$
(cf. proof of \cite[Proposition 1.3.20]{Ova16}). Therefore, $\hpi$ is Lipschitz with respect to the
metric defined by \eqref{eq: adapted metric}.
\end{remark}

\subsection{Product structure and continuous product structure} Consider 
\begin{align*}
\hSigma^+ &= \left\{\{R_n\}_{n \geq 0}: \exists\,\hy=\{S_n\}_{n\in\mathbb{Z}}\in \hSigma
 \text{ such that } \{R_n\}_{n \geq 0}=\{S_n\}_{n \geq 0}\right\}\\
\hSigma^- &= \left\{\{R_n\}_{n \leq 0}: \exists\,\hy=\{S_n\}_{n\in\mathbb{Z}}\in \hSigma
 \text{ such that } \{R_n\}_{n \leq 0}=\{S_n\}_{n \leq 0}\right\}.
\end{align*}
Points in $\hSigma^+$ will be denoted by $\hx^+$ and points in $\hSigma^-$ will be denoted by $\hx^-$. 
There are canonical projections $P^{+}: \hSigma \rightarrow \hSigma^{+}$ and $P^{-}: \hSigma \rightarrow \hSigma^{-}$, obtained by dropping all the negative coordinates respectively all of the positive coordinates
of elements of $\hSigma$. 

We define the \emph{local stable} and \emph{local unstable} sets of $\hx \in \hSigma$ by
\begin{align*}
W^{s}_{\rm loc}(\hx) &= W^{s}_{\rm loc}(P^+(\hx)) = \left\{\{S_n\}_{n \in\mathbb{Z}} \in \hSigma:R_n  = S_n \; \text{for all $n \geq 0$}\right\}\\
W^{u}_{\rm loc}(\hx) &= W^{u}_{\rm loc}(P^-(\hx)) = \left\{\{S_n\}_{n \in \mathbb{Z}} \in \hSigma:R_n  = S_n \; \text{for all $n \leq 0$}\right\}.
\end{align*}
We think of $\hSigma^{-}$ and $\hSigma^{+}$ as parametrizations of the local stable and unstable sets,
respectively. Observe that: if $\hy\in W^{s}_{\rm loc}(\hx)$ then
$d(\hsigma(\hx),\hsigma(\hy))\leq e^{-\frac{\chi}{2}}d(\hx,\hy)$;
if $\hy\in W^{u}_{\rm loc}(\hx)$ then $d(\hsigma^{-1}(\hx),\hsigma^{-1}(\hy))\leq e^{-\frac{\chi}{2}}d(\hx,\hy)$.

Given a symbol $R\in \cV$, define the \emph{cylinder} 
$$[R]=\left\{\{S_n\}_{n \in\mathbb{Z}} \in \hSigma: S_{0}  = R\right\}. $$

\begin{definition}\label{def: continuous prod struct}
A $\hsigma$-invariant probability measure $\hmu$ is said to have \emph{product structure} if the normalized restriction of $\hmu$ to every cylinder $[R]$ has the form $\hmu|_{[R]}=\hrho \times (\hnu^s\times \hnu^u)$ 
where $\hnu^s=P^-_*\hmu$ and $\hnu^u=P^-_*\hmu$ and the density $\hrho$ is measurable. Moreover, we say that $\hmu$ has \emph{continuous product structure} if the density $\hrho:\hSigma\to \real_+$ is uniformly continuous and bounded away from zero and infinity.
\end{definition}

\subsection{Invariant holonomies} A ($\alpha$-H\"older) \emph{stable holonomy} for the linear cocycle generated by $\hA:\hSigma \to GL(d,\field )$ over $\hsigma$ is a collection of linear maps $H^{s,\hA}_{\hx \hy} \in GL(d,\field )$ defined for $\hy \in W^{s}_{\rm loc}(\hx)$ which satisfy, for some $L>0$, the following properties:
\begin{enumerate}[$\circ$]
\item 
$H^{s,\hA}_{\hy \hz}=H^{s,\hA}_{\hx \hz}H^{s,\hA}_{\hy\hx} \text{ and } H^{s,\hA}_{\hx \hx} ={\rm Id}$;
\item 
$H^{s,\hA}_{\hsigma(\hy)\hsigma (\hz)}=\hA(\hz)H^{s,\hA}_{\hy \hz} \hA(\hy)^{-1};$
\item
$\left\|H^{s,\hA}_{\hy\hz}-{\rm Id}\right\| \leq L d(\hy,\hz)^{\alpha}$.
\end{enumerate}

Replacing the cocycle generated by $\hA$ over $\hsigma$ by the cocycle generated by $\hA$ over $\hsigma^{-1}$, we obtain an analogous definition for the \textit{unstable holonomies} $H^{u,\hA}_{\hx\hy}$, $\hy \in W^{u}_{\rm loc}(\hx)$.
Examples of linear cocycles admitting stable and unstable holonomies are given by \emph{locally constant cocycles} and \emph{$\frac{\chi}{2}$-fiber-bunched cocycles}. In these cases, the families of stable and unstable holonomies
are given by:
\begin{align*}
H^{s,\hA}_{\hx\hy}& = \lim _{n\rightarrow +\infty}\hA^n(\hy)^{-1}\hA^n(\hx), \; \; \hy \in W^{s}_{\rm loc}(\hx),\\
H^{u,\hA}_{\hx\hy}& = \lim _{n\rightarrow +\infty}\hA^{-n}(\hy)^{-1}\hA^{-n}(\hx), \; \; \hy \in W^{u}_{\rm loc}(\hx).
\end{align*}
See for instance \cite{BGV03, Almost}.

\section{Translation to the symbolic setting}
The goal of this section is to prove the proposition below, which provides a translation of our problem to the symbolic setting. Remember that $\mathcal B_\chi^{r,\alpha}(M)$ is the set of $C^{r,\alpha}$ cocycles that are
$\frac\chi{2}$-fiber-bunched over $(f,\mu)$.

\begin{proposition}\label{p.semiconj}
Let $f:M\to M$ be a $C^{1+\beta}$ diffeomorphism, and let $\mu$ be an ergodic $\chi$-hyperbolic
$u$-Gibbs measure. Given $A\in \mathcal B_\chi^{r,\alpha}(M)$,
there exists a topological Markov shift $(\hSigma,\hsigma)$ and a H\"older continuous map 
$\hpi:\hSigma\to M$ such that $\hpi\circ \hsigma = f\circ \hpi.$
Moreover:
\begin{enumerate}[{\rm (1)}]
\item There exists a $\hsigma$-invariant ergodic measure $\hmu$ with continuous product structure such that
$\hpi_{\ast} \hmu=\mu$.
\item The map $\hA:\hSigma\to \GL$ defined by $\hA:=A\circ \hpi$ is H\"older continuous and admits invariant holonomies.
\end{enumerate}
\end{proposition}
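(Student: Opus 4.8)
The plan is to invoke Theorem \ref{theo: Markov partitions} to produce the symbolic coding, and then to upgrade its conclusions to the three finer properties we need: ergodicity of the lift, continuous product structure, and the existence of invariant holonomies for the lifted cocycle. First I would apply Theorem \ref{theo: Markov partitions} to $(f,\mu)$, which is $\chi$-hyperbolic, to obtain the locally compact topological Markov shift $\hSigma=\hSigma_\chi$, the Hölder map $\hpi=\hpi_\chi$ with $\hpi\circ\hsigma=f\circ\hpi$, and a $\hsigma$-invariant measure $\hmu_0$ with $(\hpi)_*\hmu_0=\mu$. By Remark \ref{rem: pi is lipschitz}, with the adapted $\chi$-metric \eqref{eq: adapted metric} on $\hSigma$, the map $\hpi$ is in fact Lipschitz; this is what makes item ii) work, since precomposing a $C^{r,\alpha}$ (hence at least $\alpha$-Hölder, or Lipschitz if $r\geq 1$) map $A$ with a Lipschitz map yields an $\alpha$-Hölder map $\hA=A\circ\hpi$. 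The fiber-bunching inequality \eqref{def:fiberbunching} for $A$ over $(f,\mu)$, combined with the Lipschitz property of $\hpi$ and the hyperbolicity constant $e^{\chi/2}$ of $\hsigma$, transfers to the corresponding fiber-bunching estimate for $\hA$ over $\hsigma$ on $\hSigma$; hence the limits
\begin{displaymath}
H^{s,\hA}_{\hx\hy}=\lim_{n\to+\infty}\hA^n(\hy)^{-1}\hA^n(\hx),\qquad
H^{u,\hA}_{\hx\hy}=\lim_{n\to+\infty}\hA^{-n}(\hy)^{-1}\hA^{-n}(\hx)
\end{displaymath}
converge uniformly and define $\alpha$-Hölder invariant holonomies, as recalled at the end of Section \ref{definitions}. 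This settles ii).

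For i) the issue is twofold: the measure $\hmu_0$ need not be ergodic, and it need not have (continuous) product structure. For ergodicity, I would pass to an ergodic component: since $(\hpi)_*\hmu_0=\mu$ and $\mu$ is ergodic, almost every ergodic component $\hmu$ of $\hmu_0$ projects to an $f$-invariant measure that must equal $\mu$ (because $\hpi$ semiconjugates $\hsigma$ to $f$ and ergodic $f$-invariant measures projecting to from components of $\hmu_0$ form a measurable partition whose pushforward is the trivial—ergodic—decomposition of $\mu$), so we may replace $\hmu_0$ by such an ergodic $\hmu$ with $(\hpi)_*\hmu=\mu$. The harder part is the continuous product structure: here I would use that $\mu$ is a $u$-Gibbs state. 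Its disintegration along unstable manifolds is absolutely continuous with respect to Lebesgue, with densities given by the usual infinite-product Jacobian formula; pulling this back through the Markov coding and using that $\hpi$ maps local unstable (resp.\ stable) sets of $\hSigma$ to pieces of unstable (resp.\ stable) manifolds of $f$, one checks that the disintegration of $\hmu$ along $W^u_{loc}$ is equivalent to $\hnu^u=P^-_*\hmu$ with density controlled by a convergent product of Hölder terms, hence uniformly continuous and bounded away from $0$ and $\infty$; a symmetric argument (or the fact that for the coding the Jacobian of $\hpi$ along stable/unstable directions is Hölder) gives the same for the stable direction, and combining the two yields that $\hmu|_{[R]}=\hrho\,(\hnu^s\times\hnu^u)$ with $\hrho$ uniformly continuous and bounded away from $0$ and $\infty$, i.e.\ continuous product structure in the sense of Definition \ref{def: continuous prod struct}.

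I expect the main obstacle to be establishing the continuous product structure of $\hmu$ from the $u$-Gibbs property of $\mu$: one must relate the geometric disintegration of $\mu$ on Pesin unstable manifolds to the combinatorial product decomposition on the symbolic space, control the distortion of $\hpi$ along stable and unstable directions well enough to see that the Radon–Nikodym density is not just measurable but uniformly continuous and two-sided bounded, and handle the fact that the coding is only a measure-theoretic (not topological) isomorphism onto a full-measure set. The other steps—applying Theorem \ref{theo: Markov partitions}, extracting an ergodic component, and transferring fiber-bunching and holonomies via the Lipschitz property of $\hpi$ from Remark \ref{rem: pi is lipschitz}—are essentially bookkeeping once this point is in hand.
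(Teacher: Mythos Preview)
Your outline follows the same approach as the paper and is essentially correct. Two points deserve correction or attention.

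First, the order in which you handle ergodicity and product structure is inverted relative to what actually works cleanly. The paper defines the lift $\hmu$ by the explicit counting formula
\[
\hmu(E)=\int_M \frac{1}{\abs{\hpi^{-1}(x)}}\sum_{\hpi(\hx)=x}\car_E(\hx)\,d\mu(x),
\]
proves continuous product structure for \emph{this} $\hmu$ (using that $\hpi$ restricted to a cylinder preserves the bracket $[\cdot,\cdot]$, so the density lifts as $\hrho=\rho\circ\hpi$), and only \emph{then} passes to an ergodic component. The reason the ergodic component inherits continuous product structure is a Hopf argument: once $\hmu$ has product structure, $\hmu$-a.e.\ two points in the same cylinder are equivalent for Birkhoff averages, so ergodic components are restrictions of $\hmu$ to unions of cylinders and thus keep the same density. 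If you take the ergodic component first, you lose the explicit formula and have no direct way to compute its density.

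Second, your ``symmetric argument'' for the stable direction is not actually symmetric. A $u$-Gibbs state has absolutely continuous disintegration along \emph{one} family of leaves only; the density in that direction is the infinite Jacobian product \eqref{eq.density}, whose lift is weak H\"older and bounded by a distortion estimate. For the \emph{other} direction the paper uses absolute continuity of the transverse foliation (the Pugh--Shub holonomy Jacobian), which gives a second function of the same regularity; the product of the two is $\hrho$. You should not expect to get both halves from the Gibbs property alone.
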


The existence of the semi-conjugacy $\hpi:\hSigma\to M$ follows readily from our assumptions and Theorem \ref{theo: Markov partitions}, so it remains to prove items (1) and (2).

We begin with (2).
Let $\hA:\hSigma\to \GL$ be the map given by $\hA(\hx)=A( \hpi(\hx))$. Observe that, since $\hpi$ is Lipschitz (see Remark \ref{rem: pi is lipschitz}), $\hA$ is $\alpha$-H\"older. Moreover, by the definition of $\hA$ and
the assumptions on $A$,
\begin{displaymath}
\| \hA^n(\hx)\| \| \hA^{n}(\hx)^{-1}\| (e^{-\frac{\chi}{2}})^{| n| \alpha}\leq \theta ^{| n|},
\ \forall \hx\in \hSigma, \forall n\in \mathbb{Z}.
\end{displaymath} 
Since $d(\hsigma(\hx),\hsigma(\hy))\leq e^{-\frac{\chi}{2}}d(\hx,\hy)$ for $\hx,\hy\in\hSigma$ in the same stable manifold (similarly for points in the same unstable manifold under iteration of $\hsigma^{-1}$), it follows that $\hA$ is fiber-bunched in the sense of \cite{AvV1}.
By \cite[Proposition A.6]{AvV1}, $\hA$ admits stable and unstable. This proves (2). 

To prove (1), we proceed as in \cite{Sa13}: since the semi-conjugacy $\hpi$ satisfies property (3)
of Theorem \ref{theo: Markov partitions}, we can define 
the measure $\hmu$ by 
\begin{equation}\label{eq: def extension of mu}
\hmu(E):=  \displaystyle\int _M \left(\frac{1}{\abs{\hpi^{-1}_\#(x)}}\sum_{\hpi_{\#}(\hx)=x}\car_{E}(\hx)\right)d\mu(x)
\end{equation}
where $\hpi_\#:\hSigma^\#\to M$ is the restriction of $\hpi$ to the recurrent set $\hSigma^\#$.
The fact that $\hmu$ is indeed a probability measure is proved in \cite[Proposition~13.2]{Sa13}.
The same proposition shows that $\hmu$ is $\hsigma$-invariant and satisfies $\hpi_*\hmu=\mu$.
Furthermore, almost every ergodic component of $\hmu$ also projects to $\mu$. Passing to an
ergodic component, it remains to prove the continuous product structure.

\subsection{Extensions of Gibbs measures have continuous product structure }

A measurable set $Q\subset M$ is said to have \emph{local product structure} if for every $x,y\in Q$ there exist Pesin stable and unstable manifolds $W^s_{\rm loc}(x),W^u_{\rm loc}(y)$ that intersect in a unique point,
and this intersection point belongs to $Q$. We denote the intersection point by $[x,y]$. In this case, for any $x_0\in Q$
there are measurable sets $\cN^u_Q\subset W^u_{\rm loc}(x_0)$, $\cN^s_Q\subset W^s_{\rm loc}(x_0)$
and a bi-measurable map from $\Phi:\cN^u_Q\times \cN^s_Q\to Q$ defined by $\Phi(x,y)\mapsto [x,y]$.

 \begin{definition}\label{def:marginalsLPS}
A hyperbolic measure $\mu$ is said to have \emph{local product structure} if, for every measurable set $Q\subset M$ with local product structure and $\mu(Q)>0$, the pull back measure $\nu=\Phi^{-1}_*\mu|_Q$ of $\mu $ restricted to $Q$ is equivalent to $\nu^u\times \nu^s$, where $\nu^u$ and $\nu^s$ are the projections of $\nu$ to $\cN^u_Q$ and $\cN^s_Q$, respectively.
\end{definition}

We stress that the notions of product structure and continuous product structure stated by Definition \ref{def: continuous prod struct} and the notion of local product structure defined above do not coincide, in general.

\begin{remark}
Observe that by the absolute continuity of stable and unstable holonomies any Gibbs state ($u$-Gibbs or $s$-Gibbs) has local product structure.
\end{remark}

In the remaining of this section we prove that the measure $\hmu$ defined by \eqref{eq: def extension of mu} has continuous product structure. We begin with some auxiliary results about $\hpi$ given by Theorem
\ref{theo: Markov partitions}, as detailed in Appendix \ref{appendix}.

\begin{lemma}\label{l.product}
For every $R\in \cV$, $\hpi([R])\subset M$ is a compact subset with local product structure. Moreover,
$\hpi|_{[R]}:[R]\to \hpi([R])$ preserves $[\cdot,\cdot]$, that is:
$$\hpi([\hx,\hy])=[\hpi(\hx),\hpi(\hy)],\ \forall \hx,\hy \in[R].$$
\end{lemma}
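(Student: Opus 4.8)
The plan is to establish Lemma~\ref{l.product} in two parts, corresponding to the two assertions: first that $\hpi([R])$ is a compact set with local product structure, and second that $\hpi$ restricted to a cylinder commutes with the bracket operation $[\cdot,\cdot]$. I would begin with the second assertion, since the first will follow fairly easily from it together with the description of local stable and unstable sets in $\hSigma$.

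For the bracket-preservation statement, the key point is to unwind the definitions on both sides. On the symbolic side, for $\hx,\hy\in[R]$ (so $x_0=y_0=R$), the point $[\hx,\hy]$ is by definition the unique point of $W^s_{loc}(\hx)\cap W^u_{loc}(\hy)$, which in shift coordinates is the sequence $\hz$ with $z_n=x_n$ for $n\geq 0$ and $z_n=y_n$ for $n\leq 0$ (consistent at $n=0$ because $x_0=y_0$). On the manifold side, $[\hpi(\hx),\hpi(\hy)]$ is the unique intersection point of $W^s_{loc}(\hpi(\hx))$ and $W^u_{loc}(\hpi(\hy))$ in $M$. So I must show $\hpi(\hz)$ lies in both of these manifolds. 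For the stable manifold: since $\hz\in W^s_{loc}(\hx)$, by the contraction estimate $d(\hsigma^n\hz,\hsigma^n\hx)\le e^{-\frac{\chi}{2}n}d(\hz,\hx)\to 0$, and applying the semi-conjugacy $\hpi\circ\hsigma=f\circ\hpi$ together with continuity of $\hpi$ gives $d(f^n\hpi(\hz),f^n\hpi(\hx))\to 0$; by the characterization of Pesin stable manifolds (property (ii)/(iv) in Section~\ref{sec: non unif hyp}, i.e. points whose forward orbits converge together lie on the same stable manifold) this forces $\hpi(\hz)\in W^s_{loc}(\hpi(\hx))$, at least after checking the local-size constraints. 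Symmetrically $\hpi(\hz)\in W^u_{loc}(\hpi(\hy))$, so $\hpi(\hz)$ is an intersection point; uniqueness of the bracket on $M$ then gives $\hpi(\hz)=[\hpi(\hx),\hpi(\hy)]$, which is exactly $\hat\pi([\hx,\hy])=[\hat\pi(\hx),\hat\pi(\hy)]$.

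With bracket-preservation in hand, the local product structure of $Q:=\hpi([R])$ is almost immediate: given $x,y\in Q$, pick preimages $\hx,\hy\in[R]$; then $[\hx,\hy]\in[R]$ (cylinders are bracket-closed in $\hSigma$ since the defining conditions on coordinate $0$ are preserved), so $[x,y]=\hpi([\hx,\hy])\in\hpi([R])=Q$; uniqueness of the intersection point $W^s_{loc}(x)\cap W^u_{loc}(y)$ in $M$ is inherited from the Pesin block structure (the disks through distinct points of a cylinder's image have uniformly bounded sizes and uniformly bounded angles, because $\hpi[R]$ sits inside a single Pesin ``Markov rectangle''/hyperbolic block, cf. Ovadia's construction). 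Compactness of $\hpi([R])$ follows because $[R]$ is a closed subset of $\hSigma$ — and here one should note that although $\hSigma$ itself is only locally compact, the cylinder $[R]$ is compact: in the Sarig--Ben Ovadia construction each symbol has only finitely many symbols that can follow or precede it within a fixed ``window'' controlling the relevant coordinates, or more precisely one invokes that the image of a single cylinder lies in a compact hyperbolic block and the fibers of $\hpi$ over it are finite — so $\hpi([R])$ is the continuous image of a compact set.

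The main obstacle I expect is the careful verification of the \emph{local} (as opposed to global) nature of the stable/unstable manifolds: one must ensure that $\hpi(\hz)$ lands not merely on the \emph{global} stable manifold $W^s(\hpi(\hx))$ but on the \emph{local} piece $W^s_{loc}(\hpi(\hx))$, and that the bracket $[\hpi(\hx),\hpi(\hy)]$ is actually defined (the two local disks do meet). This requires control on the sizes of the Pesin local manifolds along $\hpi([R])$ and on how the symbolic metric \eqref{eq: adapted metric} compares with distances in $M$ under $\hpi$ — precisely the content packaged into the Sarig--Ben Ovadia theory of ``Markov rectangles'', where each $\hpi([R])$ is engineered to be a set on which stable and unstable Pesin manifolds have uniformly bounded geometry and the product structure is built in. I would cite \cite{Sa13,Ova16} for the relevant geometric properties of the rectangles $\hpi([R])$ rather than re-deriving them, and spend the bulk of the written proof on the clean definitional argument for bracket-preservation sketched above.
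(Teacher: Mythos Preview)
Your outline is broadly correct and parallels the paper's proof, but the organization and the specific cited facts differ. The paper establishes compactness first (directly from local compactness of $\hSigma$, which for a TMS means every vertex has finite degree, hence $[R]$ is compact by a Tychonoff/K\"onig argument), then invokes \cite[Proposition~12.6]{Sa13} to place $\hpi([R])$ inside a set $Z(v)$ already known to have product structure, and finally proves bracket preservation by the \emph{chain machinery} of \cite[Lemma~12.2]{Sa13}: given $\hz=[\hx,\hy]$, one exhibits an explicit chain $(v_i)$ with $\hpi(\hz)\in V^u[(v_i)_{i\le 0}]\cap V^s[(v_i)_{i\ge 0}]$, which forces $\hpi(\hz)=[\hpi(\hx),\hpi(\hy)]$ inside $Z$. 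Your route instead argues bracket preservation first via forward/backward orbit convergence and the Lipschitz property of $\hpi$, and then deduces local product structure from it. Both orders are valid; yours is a bit more conceptual, the paper's stays closer to the internal geometry of the Sarig--Ben~Ovadia rectangles.

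Two small points to clean up. First, your alternative justification for compactness of $[R]$ --- that the image lies in a compact block and fibers of $\hpi$ are finite --- does not by itself imply compactness of the domain; stick with the finite-degree argument (your first clause), which is exactly what the paper uses. Second, properties (ii)/(iv) in Section~\ref{sec: non unif hyp} do not give the characterization ``forward-orbit convergence $\Rightarrow$ same stable manifold''; they only describe contraction \emph{on} the Pesin manifolds. What you need is the converse statement from Pesin theory that points whose forward orbits approach each other at an exponential rate lie on the same global stable manifold, together with the uniform size/angle control on $\hpi([R])$ furnished by the Sarig--Ben~Ovadia construction to upgrade this to the \emph{local} manifold. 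You have already flagged the local-versus-global issue as the main obstacle, so this is just a matter of citing the right ingredients (e.g.\ \cite[Proposition~12.6]{Sa13} or its analogue in \cite{Ova16}) rather than the bullet points of Section~\ref{sec: non unif hyp}.
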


The proof of Lemma \ref{l.product} is based on \cite{Sa13}, and is contained in Appendix \ref{appendix}.

%

\begin{remark}\label{r.product}
Observe that $\hSigma^\#$ is closed by $[\cdot,\cdot]$, but $\hSigma^\#$ is usually not compact. 
By the Poincar\'e recurrence theorem, $\hSigma^\#\subset \hSigma$ has full $\hmu$-measure,
and by part (2) of Theorem \ref{theo: Markov partitions} the set
$\hpi[\hSigma^\#]$ has full $\mu$-measure.
A final observation is that, by Lemma~\ref{l.product} above, $\hpi([R]\cap\hSigma^\#)$ has product structure but it is not necessarily compact. 
\end{remark}

Given a cylinder $[R]\subset \hSigma$, let $\hmu_R:=\hmu|_{[R]}$. By Rokhlin's disintegration theorem (see \cite{FET}), $\hmu _R$ has a disintegration along local stable sets $W^s_{\rm loc}(\hx^+)$, say
$$
\hmu_R=\int_{\hSigma^+} \hmu^s_{\hx^+} d\hmu^u(\hx^+).
$$
Similarly, since $\widetilde{R}=\hpi([R])\subset M$  has local product structure by Lemma~\ref{l.product}, then there exists $\Phi:\cN_{\widetilde{R}}^u\times \cN_{\widetilde{R}}^s\to \widetilde{R}$ (see definition~\ref{def:marginalsLPS}).
From now on lets work in the coordinates $\cN^u\times \cN^s:=\cN_{\widetilde{R}}^u\times \cN_{\widetilde{R}}^s$ given by $\Phi$, $\mu_{\widetilde{R}}:=\Phi^{-1}_*\mu|_{\widetilde{R}}$ has a
disintegration along stable manifolds, say
$$
\mu_{\widetilde{R}}=\int_{\cN^u} \mu^s_{x^u} d\mu^u(x^u).
$$

The next lemma gives us a relation between the disintegrations of $\hmu_R$ and $\mu_{\widetilde{R}} $. 
To state it precisely, let us introduce the following notation: given $\hx\in \hSigma$, write $\hx=(\hx^+,\hx^-)$ where $\hx^- =P^-(\hx)$ and $\hx^+ =P^+(\hx)$; given $x=\hpi(\hx)\in M$, write $x=(x^u,x^s)$ where
$(x^u,x^s)\in \cN^u\times\cN^s$ is the inverse image of $x$ by $\Phi$. Although these notations do depend on the choices of $\cN^s$ and $\cN^u$,
they are locally well-defined, so we abuse notation and write
$\hpi ^-(\hx^-)=x^s$ and $\hpi^+(\hx^+)=x^u$. By Remark~\ref{r.product},
we can also write $\hpi_\#^-(\hx^-)=x^s$ and $\hpi_\#^+(\hx^+)=x^u$ for $\underline R\in\hSigma^\#$.

\begin{lemma}\label{l.disintegration}
 For $P^+_*\hmu$-almost every $\hx^+$ and every measurable set $B_{\hx^+}\subset \lbrace \hx^+\rbrace \times \hSigma^-$, it holds
 $$
 \hmu^s_{\hx^+}(B_{\hx^+})=\int_{\cN^s}\frac{1}{\abs{\hpi_\#^{-1}(x^u,x^s)}}\sum_{\hpi_\#(\hx^+,\hx^-)=(x^u,x^s)}\car_{B_{\hx^+}}(\hx^-)d\mu^s_{\hpi ^+(\hx^+)}(x^s).
 $$
\end{lemma}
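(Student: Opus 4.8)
The plan is to recognize the right-hand side of the asserted identity as a candidate family of conditional measures for the Rokhlin disintegration of $\hmu_R$ along local stable sets, and then to conclude by the (essential) uniqueness of such a disintegration. For each $x^+\in\hSigma^+$ for which $\hpi^+(x^+)$ makes sense, put $x^u:=\hpi^+(x^+)$ and define a measure $\nu_{x^+}$ on $W^s_{loc}(x^+)=\{x^+\}\times\hSigma^-$ by
$$
\nu_{x^+}(B):=\int_{\cN^s}\frac{1}{\abs{\hpi^{-1}(x^u,x^s)}}\sum_{\hpi(x^+,x^-)=(x^u,x^s)}\car_{B}(x^-)\,d\mu^s_{x^u}(x^s).
$$
I will check that (a) $x^+\mapsto\nu_{x^+}$ is a measurable family and each $\nu_{x^+}$ is a probability measure carried by $W^s_{loc}(x^+)$, and (b) $\int_{\hSigma^+}\nu_{x^+}(E)\,d(P^+_*\hmu_R)(x^+)=\hmu_R(E)$ for every measurable $E\subset[R]$. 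Given (a) and (b), uniqueness of the disintegration of $\hmu_R$ along the measurable partition $\{W^s_{loc}(x^+)\}$ forces $\hmu^s_{x^+}=\nu_{x^+}$ for $P^+_*\hmu_R$-almost every $x^+$ (equivalently for $P^+_*\hmu$-a.e.\ $x^+$, since these measures agree on $P^+([R])$), which is the statement.

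The geometric input is Lemma~\ref{l.product}. Because $\hpi$ sends $[\cdot,\cdot]$ to $[\cdot,\cdot]$, writing a point of $[R]$ as $\hx=(x^+,x^-)$ in the symbolic product coordinates and $\hpi(\hx)=(x^u,x^s)$ in the coordinates $\cN^u\times\cN^s$ of $\tilde R:=\hpi([R])$, one sees that $x^u$ depends only on $x^+$ and $x^s$ only on $x^-$: e.g.\ if $\hx=(x^+,x^-)$ and $\hx'=(x^+,y^-)$, then $\hx'=[\hx,\hx']$, so $\hpi(\hx')=[\hpi(\hx),\hpi(\hx')]$ has the same $\cN^u$-coordinate as $\hpi(\hx)$, and symmetrically for the $\cN^s$-coordinate. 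This makes $\hpi^+$ and $\hpi^-$ well defined, gives $\hpi(x^+,x^-)=(\hpi^+(x^+),\hpi^-(x^-))$, and exhibits the fibre of $\hpi|_{[R]}$ over $(x^u,x^s)$ as the product $(\hpi^+)^{-1}(x^u)\times(\hpi^-)^{-1}(x^s)$. Finiteness of these fibres — needed for the sums defining $\nu_{x^+}$ to make sense — comes from Theorem~\ref{theo: Markov partitions}(2), and the measurability in (a) follows from measurability of the coding, of the finite-to-one map $\hpi$, and of $x^u\mapsto\mu^s_{x^u}$.

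The substance is (b). Starting from \eqref{eq: def extension of mu} and using that $\car_E$ vanishes off $E\subset[R]$, one gets $\hmu_R(E)=\int_{\tilde R}\abs{\hpi^{-1}(x)}^{-1}\sum_{\hpi(\hz)=x,\,\hz\in[R]}\car_E(\hz)\,d\mu(x)$. One then passes to the coordinates $\cN^u\times\cN^s$ on $\tilde R$, disintegrates $\mu_{\tilde R}=\int_{\cN^u}\mu^s_{x^u}\,d\mu^u(x^u)$ along stable manifolds, and uses the factorization $\hpi^{-1}(x^u,x^s)\cap[R]=(\hpi^+)^{-1}(x^u)\times(\hpi^-)^{-1}(x^s)$ to split the finite preimage sum into a sum over $z^+$ and a sum over $z^-$; the sum over $z^+\in(\hpi^+)^{-1}(x^u)$, together with the $\mu^u$-integral, should recombine into $\int_{\hSigma^+}(\cdot)\,d(P^+_*\hmu_R)(x^+)$ with precisely $\nu_{x^+}(E)$ remaining inside. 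The main obstacle is exactly this last reorganization: one has to check that the weights $\abs{\hpi^{-1}(\cdot)}^{-1}$ coming from \eqref{eq: def extension of mu} are apportioned among the symbolic preimages in such a way that $\nu_{x^+}$ comes out with total mass one and $P^+_*\hmu_R$ appears with the correct density. This step genuinely uses the fine structure of the Sarig--Ben~Ovadia coding recalled in Section~\ref{sec: Markov partitions} and in \cite{Sa13,Ova16} (bounded multiplicity of $\hpi$ and compatibility of $\hpi$ with the local product structures of cylinders), beyond the bare statement of Lemma~\ref{l.product}. Once (b) is established, the lemma follows from the uniqueness of the disintegration as in the first paragraph.
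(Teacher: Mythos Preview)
Your approach is essentially the same as the paper's, which also computes $\hmu_R(B)$ from \eqref{eq: def extension of mu}, uses Lemma~\ref{l.product} to factor $\hpi|_{[R]}=(\hpi^+,\hpi^-)$ and split the preimage sum into a double sum over $x^+$ and $x^-$, disintegrates $\mu_{\tR}$ along stable manifolds, and appeals to uniqueness of disintegration. The reorganization you flag as the ``main obstacle'' is handled in a single line in the paper and uses nothing beyond Lemma~\ref{l.product}: the paper simply records $\hmu_R(B)=\int_{\cN^u}\sum_{\hpi^+(x^+)=x^u}\hmu^s_{x^+}(B_{x^+})\,d\mu^u(x^u)$ and invokes uniqueness, without pausing to check that the $\nu_{x^+}$ are probabilities or that the outer integrating measure coincides with $P^+_*\hmu_R$ (these points are not needed for the sole application, Corollary~\ref{c.muproduct}).
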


\begin{proof}

Given a symbol $R\in \cV$, let $\tR:=\hpi([R])$. Fix $\widehat{p}\in [R]$, and let $p=\hpi(\widehat{p})$. Let $\cN^u\times \cN^s$ be coordinates on $\tR$ centered at $p$, induced by the local product structure. By Lemma~\ref{l.product}, the map $\hpi|_{[R]} :[R] \to \tR$ preserves $[\cdot,\cdot]$. Using the product structure coordinates $\hSigma^+\times \hSigma^-$ on $[R]$, it follows that $\hpi|_{[R]} :[R] \to \cN^u\times \cN^s$
is given by
$$
\hpi(\hx^+,\hx^-)=(\hpi^+(\hx^+),\hpi^-(\hx^-))=(x^u,x^s).
$$
Let $B \subset [R]$ be a measurable set with $\hmu(B)>0$. We have
$$
 \hmu_R(B)=\int_M\left(\frac{1}{\abs{\hpi_\#^{-1}(x^u,x^s)}}\sum_{\hpi_\#^+(\hx^+)=x^u}\sum_{\hpi_\#^-(\hx^-)=x^s}\car_{B_{\hx^+}}(\hx^-)\right)d \mu_{\tR},
$$
where $B_{\hx^+}=\lbrace \hy^-\in \hSigma^-:(\hx^+,\hy^-)\in B\rbrace$. Consequently,
$$
\hmu_R(B)= \int_{\cN^u} \sum_{\hpi_\#^+(\hx^+)=x^u}\hmu^s_{\hx^+}(B_{\hx^+})d\mu^u(x^u),
$$
where
$$
\hmu^s_{\hx^+}(B_{\hx^+})=\int_{\cN^s}\frac{1}{\abs{\hpi_\#^{-1}(x^u,x^s)}} \sum_{\hpi_\#^-(\hx^-)=x^s}\car_{B_{\hx^+}}(\hx^-)d\mu^s_{x^u}(x^s).
$$
Since disintegrations are uniquely defined almost everywhere, the result follows. 
\end{proof}

A simple consequence of the lemma above is the following corollary.

\begin{corollary}\label{c.muproduct} 
If $\mu$ has local product structure then $\hmu$ has product structure. Moreover, if $\mu$ has local product structure with density $\rho$ then $\hmu$ has product structure with density $\hrho$ given by $\hrho(\hx)=\rho(\hpi(\hx))$.
\end{corollary}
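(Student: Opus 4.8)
The plan is to deduce both assertions from Lemma~\ref{l.disintegration} by feeding into its formula the extra information carried by the hypothesis. Fix $R\in\cV$, write $\tR=\hpi([R])$ and fix product coordinates $\cN^u\times\cN^s$ on $\tR$ as in Lemma~\ref{l.product}. The assumption that $\mu$ has local product structure with density $\rho$ means, by Definition~\ref{def:marginalsLPS}, that $\mu_{\tR}$ is equivalent to the product $\nu^u\times\nu^s$ of its own marginals, with $d\mu_{\tR}=\rho\,d(\nu^u\times\nu^s)$; since $\nu^u$ and $\nu^s$ are those marginals one has the normalizations
$$
\int_{\cN^s}\rho(x^u,x^s)\,d\nu^s(x^s)=1 \quand \int_{\cN^u}\rho(x^u,x^s)\,d\nu^u(x^u)=1
$$
almost everywhere, and the disintegration of $\mu_{\tR}$ along the stable leaves $\{x^u\}\times\cN^s$ has conditional measures $d\mu^s_{x^u}(x^s)=\rho(x^u,x^s)\,d\nu^s(x^s)$.

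Next I would substitute $d\mu^s_{x^u}(x^s)=\rho(x^u,x^s)\,d\nu^s(x^s)$ into the formula of Lemma~\ref{l.disintegration} and into the identity $\hmu\mid_{[R]}(B)=\int_{\cN^u}\sum_{\hpi^+(x^+)=x^u}\hmu^s_{x^+}(B_{x^+})\,d\mu^u(x^u)$ obtained in its proof. Using the factorization of $\hpi\mid_{[R]}$ as $\hpi^+\times\hpi^-$ (so that the number of $\hpi$-preimages in $[R]$ of $(x^u,x^s)$ splits as a factor depending only on $x^u$ times one depending only on $x^s$) one extracts the $x^u$-part of this count from the stable-fibre integral and collects the remaining inner sum over the preimages of $x^s$. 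What survives is a measure $\hnu^s$ on $\hSigma^-$ obtained by pushing $\nu^s$ through the stable preimage kernel: it is independent of $x^+$ and, using $\int\rho(\cdot,x^s)\,d\nu^u=1$, it equals $P^-_*\hmu$ on $P^-([R])$. Carrying this out (and using $\int\rho(x^u,\cdot)\,d\nu^s=1$ to see that the leftover scalar is $1$), and recalling that $\hpi(x^+,x^-)=(\hpi^+(x^+),\hpi^-(x^-))$ in the product coordinates, one rewrites, for product sets $D\times C\subset\hSigma^+\times\hSigma^-$,
$$
\hmu\mid_{[R]}(D\times C)=\int_{D\times C}\rho\big(\hpi(x^+,x^-)\big)\,d(\hnu^u\times\hnu^s), \qquad \hnu^u:=P^+_*\hmu .
$$
Since product sets generate, this gives $\hmu\mid_{[R]}=\hrho\,(\hnu^s\times\hnu^u)$ with $\hrho(\hx)=\rho(\hpi(\hx))$, which is the second assertion; the symmetric computation along unstable leaves is consistent with it by uniqueness of Rokhlin disintegrations. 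The first assertion then follows immediately: when $\rho$ is only assumed measurable, the density $\rho\circ\hpi$ of $\hmu$ is still measurable because $\hpi$ is continuous, so $\hmu$ has product structure in the sense of Definition~\ref{def: continuous prod struct}.

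The step I expect to be the main nuisance is the bookkeeping of the preimage counts: one must check that $\abs{\hpi^{-1}(x^u,x^s)}$, as it enters Lemma~\ref{l.disintegration}, factors compatibly with the product coordinates on $[R]$ so that its $x^u$-dependent factor can be pulled out of the stable integral, and then verify that the emerging reference measure on $\hSigma^-$ is \emph{exactly} $P^-_*\hmu$ (not merely equivalent to it) and that the leftover scalar is exactly $1$. Both points use precisely the normalizations $\int\rho(x^u,\cdot)\,d\nu^s=1$ and $\int\rho(\cdot,x^s)\,d\nu^u=1$ forced by $\nu^u,\nu^s$ being the genuine marginals of $\mu_{\tR}$, and this is what pins the density of $\hmu$ down to be exactly $\rho\circ\hpi$ with no extra factor. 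The rest is a routine rearrangement of the formula in Lemma~\ref{l.disintegration} together with uniqueness of disintegrations.
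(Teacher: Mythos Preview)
Your approach differs from the paper's, and the step you flag as ``the main nuisance'' is in fact a genuine gap rather than bookkeeping. You correctly observe that $\hpi\mid_{[R]}=\hpi^+\times\hpi^-$, so the number of preimages of $(x^u,x^s)$ \emph{inside $[R]$} factors as a product. But the weight $1/\abs{\hpi^{-1}(x^u,x^s)}$ appearing in Lemma~\ref{l.disintegration} (and inherited from the definition~\eqref{eq: def extension of mu} of $\hmu$) is the \emph{global} preimage count over all of $\hSigma^\#$, not the count within $[R]$. There is no reason for this global count to split as a function of $x^u$ times a function of $x^s$: a point of $\tR$ may also lie in the images $\hpi([R'])$ of other cylinders, and a sum of products (one per cylinder) need not factor. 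So the extraction you describe cannot be carried out in the way you outline, and the normalization arguments you invoke to pin the density down to exactly $\rho\circ\hpi$ (with no extra factor) do not go through.

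The paper sidesteps this issue by computing the density via a differentiation argument rather than by substitution. It lifts the reference product measure $\nu=\nu^s\times\nu^u$ on $\tR$ to $\hat\nu$ on $[R]$ by the \emph{same} formula~\eqref{eq: def extension of mu} used to lift $\mu$ to $\hmu$, and then identifies $\hrho(\hx)$ as $\lim_n \hat\mu(\hF_n)/\hat\nu(\hF_n)$ along shrinking neighborhoods $\hF_n\to\hx$. In that ratio the troublesome weights $1/\abs{\hpi^{-1}(x)}$ appear identically in numerator and denominator, and only the extra factor $\rho(x)=\rho(\hpi(\hx))$ survives in the limit. This delivers $\hrho=\rho\circ\hpi$ without ever needing $\abs{\hpi^{-1}}$ to factor. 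What the paper's method buys is exactly the cancellation that your direct substitution cannot achieve; conversely, your route would give a more explicit computation if the factorization held, but it does not in general.
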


\begin{proof}
The first part of the claim is a direct consequence of Lemma \ref{l.disintegration}. To prove the second part, 
write $\Phi^{-1}_*\mu|_{\hpi([R])}=\rho \nu$ where $\nu:=\nu^s\times\nu^u$ and $\nu^s$, $\nu^u$ are 
as in Definition~\ref{def:marginalsLPS}. Let $\hnu$ be the lift of $\nu$ as in ~\eqref{eq: def extension of mu}.
By definition, if $\widehat{F}\subset \hSigma$ is a measurable set and
$\varphi:\hSigma\to \real$ is a measurable function, then 
\begin{align*}
\hnu(\hF)&=\int_{M} \frac{1}{\abs{\hpi_\#^{-1}(x)}} \left(\sum_{\hpi_\#(\hx)=x}\car_{\hF}(\hx)\right)d\nu(x)\\
\int \varphi d\hnu&=\int_{M} \frac{1}{\abs{\hpi_\#^{-1}(x)}} \left(\sum_{\hpi_\#(\hx)=x}\varphi(\hx)\right)d\nu(x).
\end{align*}
Hence
$$
\int_{\hSigma}\car_{\hF}(\hx)\rho(\hpi(\hx))d\hnu=\int_{M} \frac{1}{\abs{\hpi_\#^{-1}(x)}} \left(\sum_{\hpi_\#(\hx)=x}\car_{\hF}(\hx)\rho(x)\right)d\nu(x)
$$
and we conclude that 
$$
\hmu(\widehat{F})=\int_M \frac{1}{\abs{\hpi_\#^{-1}(x)}} \left(\sum_{\hpi_\#(\hx)=x}\car_{\hF}(\hx)\right)\rho(x)\, d\nu(x)=\int_{\hF} \hrho d\hnu.
$$

\end{proof}

Now we are ready to prove that $\hmu$ has continuous product structure. 
Assume that $\mu$ is an $s$-Gibbs state (the case of $u$-Gibbs states is analogous).

\begin{proposition}\label{p.gibbsproduct}
If $\mu$ is an $s$-Gibbs state then $\hmu$ has product structure with density function $\hrho:\hSigma\to \real_+$ uniformly continuous and bounded away from zero and infinity.
\end{proposition}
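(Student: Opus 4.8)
The plan is to reduce the statement to Corollary~\ref{c.muproduct} together with a uniform continuity argument for the density. By Corollary~\ref{c.muproduct}, once we know $\mu$ has local product structure with some density $\rho$, then $\hmu$ automatically has product structure with density $\hrho(\hx)=\rho(\hpi(\hx))$. Since $\mu$ is an $s$-Gibbs state it has local product structure (by the remark on absolute continuity of holonomies), so the only work is to show that the density $\rho$ on each piece $\tR=\hpi([R])$ can be taken \emph{uniformly} continuous and bounded away from $0$ and $\infty$, and that this passes through the formula $\hrho=\rho\circ\hpi$ to give the same properties for $\hrho$ on all of $\hSigma$ (i.e., with constants independent of the cylinder $[R]$). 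The last point is where the compactness built into Markov partitions — the sets $\hpi([R])$ sit inside the finitely-parametrized family of ``product sets'' $Z(v)$ from Sarig's construction — will be exploited.

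First I would make the density on a fixed piece $\tR$ explicit. Because $\mu$ is $s$-Gibbs, its disintegration along stable manifolds $W^s_{\mathrm{loc}}(x^u)$ is absolutely continuous with respect to Lebesgue (equivalently, to the reference Riemannian volume on the disk), with densities given by the classical Jacobian formula
\[
\frac{d\mu^s_{x^u}}{d\mathrm{Leb}}(z)
= \prod_{n\ge 0}\frac{\operatorname{Jac}\big(Df\mid E^s_{f^n(y_0)}\big)}{\operatorname{Jac}\big(Df\mid E^s_{f^n(z)}\big)},
\]
where $y_0$ is a basepoint; this product converges uniformly on a hyperbolic block because of the uniform contraction along $W^s$ there and the H\"older continuity of $x\mapsto E^s_x$ and of $Df$. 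The stable holonomies of $f$, which are absolutely continuous with uniformly H\"older Jacobians on hyperbolic blocks (Pesin theory), identify the stable fibers over different $x^u$, and the u-direction marginal $\mu^u$ is, after this identification, the common measure; writing $\mu_{\tR}=\rho\cdot(\nu^s\times\nu^u)$ then exhibits $\rho$ as a product of such Jacobian cocycle ratios together with a holonomy Jacobian, hence \emph{continuous} on $\tR$, and bounded above and below on $\tR$ with bounds depending only on the hyperbolic block containing $\tR$.

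Next I would upgrade ``continuous on each $\tR$'' to ``uniformly continuous and uniformly bounded on $\hSigma$.'' The key is that the pieces $\hpi([R])$ are not arbitrary: by Sarig's construction each $R$ is a $\psi^{p^u,p^s}_x$-image of a fixed combinatorial building block, and — crucially — one may take a \emph{single} hyperbolic block $\cH(C,\tau)$ (equivalently a single choice of Pesin constants) that works for all the relevant pieces, because the symbolic coding is built precisely so that the local charts have controlled size. Thus the Jacobian-ratio and holonomy-Jacobian estimates above hold with constants $C,\tau$ uniform over all cylinders, giving a uniform modulus of continuity and uniform bounds $0<c\le\rho\le C<\infty$ on every $\tR$. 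Now I use $\hrho(\hx)=\rho(\hpi(\hx))$ from Corollary~\ref{c.muproduct} and the fact (Remark~\ref{rem: pi is lipschitz}) that $\hpi$ is Lipschitz from $(\hSigma,d)$ to $M$: composing a uniformly continuous function with a Lipschitz map yields a uniformly continuous function, and the bounds transfer verbatim. Hence $\hrho:\hSigma\to\real_+$ is uniformly continuous and bounded away from $0$ and $\infty$, which is the assertion; the $u$-Gibbs case follows by replacing $f$ with $f^{-1}$ and exchanging the roles of $\hSigma^+$ and $\hSigma^-$.

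The main obstacle I anticipate is the uniformity over cylinders in the last paragraph: one must be careful that the Pesin-block constants controlling the convergence of the Jacobian product and the regularity of the stable holonomies can be chosen once and for all along the coded orbit, rather than deteriorating from one symbol to the next. This is exactly the kind of control the Sarig/Ben~Ovadia construction is designed to provide (the charts $\psi_x$ have sizes bounded below along $\hSigma^\#$), so the argument should go through, but spelling out the uniform estimate — rather than merely a pointwise-continuous one — is the technical heart of the proof.
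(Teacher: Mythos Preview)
Your overall architecture matches the paper's: reduce to Corollary~\ref{c.muproduct}, write the density $\rho$ as a product of a stable-Jacobian ratio $\psi$ and an unstable-holonomy Jacobian $\phi$, and then argue regularity of each factor. The paper does exactly this decomposition $\hrho=\hpsi\cdot\hphi$.

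Where your argument diverges from the paper, and where it has a genuine gap, is in the uniformity step. You propose to prove that $\rho$ is uniformly continuous \emph{on $M$} with a modulus independent of the cylinder, by claiming that ``one may take a single hyperbolic block $\cH(C,\tau)$ that works for all the relevant pieces.'' This is not correct: the vertices in Sarig's graph are charts $\psi_x^{p^u,p^s}$ whose size parameters $p^u,p^s$ range over a countable set with no positive lower bound, so the Pesin constants governing contraction on $W^s$ inside $\hpi([R])$ genuinely deteriorate as $R$ varies. Consequently the convergence of the Jacobian product on $M$ is not uniform across cylinders, and your ``compose a uniformly continuous $\rho$ with the Lipschitz $\hpi$'' step is not justified as stated.

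The paper sidesteps this by never attempting uniform estimates on $M$. Instead it lifts \emph{first} and works entirely on $\hSigma$: one uses Sarig's result (Proposition~12.6 there) that the map $\hx\mapsto E^s(\hpi(\hx))$ is globally H\"older on $\hSigma$, hence so is $\hx\mapsto \log JDf\mid_{E^s(\hpi(\hx))}$, with a single H\"older constant $C$ and exponent $\gamma$. Combined with the \emph{uniform} contraction $d(\hsigma^i\hx,\hsigma^i\hy)\le e^{-\chi i/2}d(\hx,\hy)$ on symbolic stable sets, the telescoping sum for $\log\hDelta_n$ is bounded by a geometric series with ratio $e^{-\chi\gamma/2}$ independent of the cylinder, yielding both the uniform bounds $e^{-C/(1-e^{-\chi\gamma/2})}\le\hDelta\le e^{C/(1-e^{-\chi\gamma/2})}$ and a ``weak H\"older'' modulus for $\hDelta$ (Lemma~\ref{l.Delta}). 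The same computation handles $\hphi$. In short: the non-uniform hyperbolicity on $M$ becomes uniform once lifted to the shift, and that is the mechanism for uniformity---not the existence of a single Pesin block downstairs. Your last paragraph correctly senses that Sarig's construction is what provides the needed control, but the control it provides is precisely the global H\"older continuity of the lifted stable bundle on $\hSigma$, and the argument should be run there.
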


In Corollary \ref{c.muproduct} we proved that $\hmu$ has product structure and its density function
$\hrho$ is given by $\hrho(\hx)=\rho (\hpi(\hx))$. It remains to show that 
$\hrho$ is uniformly continuous and bounded away from zero and infinity.
By the assumption that $\mu$ is an $s$-Gibbs state, it admits a disintegration
$(\mu^s_p)_p$ on stable leaves such that $\mu^s_p$ is absolutely continuous with
respect to the Lebesgue measure on the respective stable leave for almost every $p$. 
In other words, if $m^s_p$ is the Lebesgue measure in the stable manifold of $p\in M$,
then $\mu^s_p=\psi \, m^s_p$ for some density function $\psi$. By \cite{Le84a},
if we denote by $JDf^n|_{E^s(x)} $ the Jacobian of $Df^n$ along $E^s_x$, then
$\psi$ satisfies
\begin{equation}\label{eq.density}
\Delta(x,y):=\frac{\psi(x)}{\psi(y)}=\lim_{n\to \infty}\frac{JDf^n|_{E^s(x)} }{JDf^n|_{E^s(y)} }
\end{equation}
for every $x,y$ in the same stable manifold.
Let $\Gamma\subset \hSigma\times \hSigma$ be the set defined by $\Gamma=\lbrace (\hx,\hy)\in \hSigma\times \hSigma:R_n=S_n,\forall n\geq 0\rbrace$, and consider the function $\hDelta:\Gamma\to \real$ given by $\hDelta(\hx,\hy)=\Delta(\hpi(\hx),\hpi(\hy))$. We now study the regularity properties of $\hDelta$.
Let us recall the notion of weak H\"older continuity.

\begin{definition}
Let $(X,\d _X)$ be a metric space. A function $g:X \to \real$ is called \emph{weak H\"older} if there exists $\gamma>0$ such that for every $\epsilon>0$ there exists $C(\epsilon)>0$ such that 
$$
\abs{g(x)-g(y)}\leq C(\epsilon)\d_X(x,y)^\gamma + \epsilon,\ \forall x,y\in X.
$$
\end{definition}

Clearly, every weak H\"older functions is absolutely continuous.

\begin{lemma}\label{l.Delta}
The function $\hDelta$ is weak H\"older (hence uniformly continuous), and it is bounded away from zero and infinity.
\end{lemma}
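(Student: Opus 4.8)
The plan is to establish the two asserted properties of $\hDelta$ — boundedness away from $0$ and $\infty$, and weak H\"older regularity — directly from the defining formula $\hDelta(\hx,\hy)=\Delta(\hpi(\hx),\hpi(\hy))$ together with the product formula \eqref{eq.density} for $\Delta$. First I would rewrite $\Delta(x,y)$ as an infinite product: from \eqref{eq.density},
$$
\Delta(x,y)=\prod_{n=0}^{\infty}\frac{JDf\mid_{E^s(f^n(x))}}{JDf\mid_{E^s(f^n(y))}},
$$
so that $\log\hDelta(\hx,\hy)=\sum_{n\ge 0}\big[\phi(f^n\hpi(\hx))-\phi(f^n\hpi(\hy))\big]$ where $\phi(z):=\log JDf\mid_{E^s(z)}$. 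The point is that on a hyperbolic block the stable Jacobian is H\"older along the manifold, but globally one only controls it after passing to the symbolic model: the composition $\phi\circ\hpi$ restricted to $\hSigma$ should be weak H\"older in $\hx$ with respect to the adapted metric \eqref{eq: adapted metric}, because the Markov coding ``tames'' the non-uniformity (this is exactly the kind of estimate underlying Sarig's and Ben Ovadia's construction, and is the symbolic analogue of the distortion bounds used in \cite{Le84a,LY}).

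Granting that $\widetilde\phi:=\phi\circ\hpi$ is weak H\"older on $\hSigma$ — say $\abs{\widetilde\phi(\hx)-\widetilde\phi(\hy)}\le C(\epsilon)d(\hx,\hy)^\gamma+\epsilon$ — the estimate for $\hDelta$ on $\Gamma$ follows by a standard telescoping argument. For $(\hx,\hy)\in\Gamma$ we have $x_i=y_i$ for all $i\ge0$, hence $\hsigma^n\hx$ and $\hsigma^n\hy$ agree on coordinates $\ge -n$, so $d(\hsigma^n\hx,\hsigma^n\hy)\le e^{-\frac{\chi}{2}n}d(\hx,\hy)$ when $n$ is at least the first negative index where $\hx,\hy$ disagree, and is $\le e^{-\frac{\chi}{2}n}$ in any case. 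Summing,
$$
\abs{\log\hDelta(\hx,\hy)}\le\sum_{n\ge0}\abs{\widetilde\phi(\hsigma^n\hx)-\widetilde\phi(\hsigma^n\hy)}
\le\sum_{n\ge 0}\Big(C(\epsilon')e^{-\frac{\gamma\chi}{2}n}d(\hx,\hy)^{\gamma}+\epsilon'\Big)?
$$
— here one must be slightly careful, since naively summing a constant $\epsilon'$ diverges. The fix is the usual one: split the sum at $N\sim\log(1/\epsilon)$. For $n\le N$ use the weak-H\"older bound with a small auxiliary $\epsilon'$, and for $n>N$ note that $d(\hsigma^n\hx,\hsigma^n\hy)\le e^{-\frac{\chi}{2}n}$ is already tiny, so $\widetilde\phi$ being merely uniformly continuous (which weak H\"older implies) gives $\abs{\widetilde\phi(\hsigma^n\hx)-\widetilde\phi(\hsigma^n\hy)}\le\omega(e^{-\frac{\chi}{2}n})$ with $\omega$ a modulus of continuity; the tail $\sum_{n>N}\omega(e^{-\frac{\chi}{2}n})$ is then controlled. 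Combining the two ranges produces a bound of the form $C(\epsilon)d(\hx,\hy)^{\gamma}+\epsilon$, giving weak H\"older continuity of $\log\hDelta$, hence of $\hDelta$; taking $d(\hx,\hy)=0$ (or just using the $\epsilon$-bound) yields that $\log\hDelta$ is globally bounded, so $\hDelta$ is bounded away from $0$ and $\infty$. Uniform continuity of $\hDelta$ is then immediate from weak H\"older-ness (for any target precision choose $\epsilon$ half of it, then make $d(\hx,\hy)^{\gamma}$ small).

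The main obstacle is the first step: proving that the stable Jacobian cocycle $\phi\circ\hpi$ is weak H\"older on the symbolic space. This is where the specific structure of the Sarig--Ben Ovadia coding enters — one needs that along a cylinder the pre-images under $\hpi$ lie in a common hyperbolic block (or in sets $Z(v)$ as in the proof of Lemma~\ref{l.product}) on which $z\mapsto JDf\mid_{E^s(z)}$ has a uniform H\"older constant, and that the coding radius controls how deep into such a block a point sits, yielding the $\epsilon$-error that makes the modulus ``weak'' rather than genuinely H\"older. I would isolate this as the technical heart of the lemma, invoking the relevant regularity statements from \cite{Sa13, Ova16} (and the Lipschitz property of $\hpi$ from Remark~\ref{rem: pi is lipschitz}) to reduce it to the H\"older dependence of the Oseledets stable distribution on hyperbolic blocks, which is classical Pesin theory (\cite{BaP07}). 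Everything after that is the routine telescoping above.
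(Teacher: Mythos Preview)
Your argument establishes a bound of the form $\lvert\log\hDelta(\hx,\hy)\rvert\le C(\epsilon)\,d(\hx,\hy)^{\gamma}+\epsilon$, and this does yield the boundedness of $\hDelta$ away from $0$ and $\infty$. But it is \emph{not} the weak H\"older continuity of $\hDelta$ on $\Gamma$. The function $\hDelta$ has domain $\Gamma\subset\hSigma\times\hSigma$; weak H\"older means $\lvert\hDelta(\hx,\hy)-\hDelta(\hz,\hp)\rvert\le C(\epsilon)\,d_{\Gamma}\big((\hx,\hy),(\hz,\hp)\big)^{\gamma}+\epsilon$ for \emph{two} pairs $(\hx,\hy),(\hz,\hp)\in\Gamma$. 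What you have controlled is the value of $\log\hDelta$ at a single pair in terms of the distance between its two components; at best this is continuity at the diagonal of $\Gamma$, and (via the cocycle identity $\Delta(x,y)\Delta(y,z)=\Delta(x,z)$) continuity among pairs sharing the same positive part. It does not compare pairs lying on different local stable sets, which is exactly what the subsequent use of the lemma (continuity of $\hpsi$ as the positive coordinate $y^{+}$ varies) requires.

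The paper's proof handles this by writing $\log\hDelta_n(\hx,\hy)-\log\hDelta_n(\hz,\hp)$ as a telescoping sum and splitting it at an index $j$: for $i<j$ one pairs $\hsigma^{i}\hx$ with $\hsigma^{i}\hz$ and $\hsigma^{i}\hy$ with $\hsigma^{i}\hp$ (points \emph{not} on the same stable set, so distances grow by the Lipschitz constant $L=e^{\chi/2}$ of $\hsigma$), while for $i\ge j$ one pairs $\hsigma^{i}\hx$ with $\hsigma^{i}\hy$ and $\hsigma^{i}\hz$ with $\hsigma^{i}\hp$ (points on the same stable set, contracting). Optimising over $j$ yields the weak H\"older estimate. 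Your ``split the sum at $N$'' idea is the right mechanism, but you have applied it to the wrong quantity. A second, smaller point: you work under the hypothesis that $\widetilde\phi=\log Jf^{s}$ is only weak H\"older and treat proving this as the technical heart. In fact the paper invokes \cite[Proposition~12.6]{Sa13} to get that $\hx\mapsto E^{s}(\hpi(\hx))$ is genuinely H\"older on $\hSigma$, so $\widetilde\phi$ is genuinely H\"older; the ``weak'' in the regularity of $\hDelta$ arises not from $\widetilde\phi$ but from the expanding branch of the two-pair comparison just described.
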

\begin{proof}
Define $Jf^s(\hx):=JDf|_{E^s(\hpi(\hx))}$ and let 
 $$ \hDelta_n(\hx,\hy)=\frac{JDf^n|_{E^s(\hpi(\hx))} }{JDf^n|_{E^s(\hpi(\hy))} },\ (\hx,\hy)\in\Gamma.$$
The map $\hx\to E^s(\hpi(\hx))$ is H\"older continuous (the two dimensional case has been proved by Sarig, the necessary tools to prove the multidimensional version of this lemma have been developed in \cite{Ova16}, hence the same proof of the case of surfaces establishes the higher dimensional case). Since $f$ is $C^{1+\beta}$
and $M$ is compact, $\hx\mapsto \log Jf^s(\hx )$ is H\"older continuous and bounded away from zero. Let $\gamma$ be its H\"older exponent and $C>0$ the corresponding constant. Thus, for every $(\hx,\hy)\in \Gamma$,
\begin{displaymath}
\begin{split}
\left| \log \hDelta_n (\hx,\hy) \right|&= \left| \sum_{i=0}^{n-1} \log Jf^s[\hsigma^i(\hx)]-\log Jf^s[(\hsigma^i(\hy)]\right| \\
&\leq \sum_{i=0}^{n-1} \left| \log Jf^s[\hsigma^i(\hx)]-\log Jf^s[(\hsigma^i(\hy)]\right| \\
&\leq C \sum_{i=0}^{n-1}d(\hsigma^i(\hx),\hsigma^i(\hy))^{\gamma}\leq C \frac{1}{1-e^{-\chi \gamma/2}}d(\hx,\hy)^{\gamma}. 
\end{split}
\end{displaymath} 
Therefore, $\log \hDelta_n$ converges uniformly to $\log \hDelta$ and 
\begin{equation}\label{eq: bounds on gamma}
   e^{-C/(1-e^{-\chi \gamma/2})}\leq \hDelta(\hx,\hy)\leq e^{C/(1-e^{-\chi \gamma/2})}.
\end{equation}
We now claim that $\log \hDelta$ is weak H\"older. Indeed, if $(\hx,\hy),(\hz,\hp)\in\Gamma$ then
\begin{align*}
 &\log \hDelta_n(\hx,\hy)-\log \hDelta_n(\hz,\hp)\\
 &= \sum_{i=0}^{n-1} \left(\log Jf^s[(\hsigma^i(\hx)]-\log Jf^s[(\hsigma^i(\hy)]-\log Jf^s[(\hsigma^i(\hz)]
 +\log Jf^s[(\hsigma^i(\hp)]\right). 
  \end{align*}
Recalling that $\hx\mapsto \log Jf^s(\hx)$ is $(C,\gamma)$-H\"older, we obtain that
\begin{align*}
&| \log \hDelta_n(\hx,\hy)-\log \hDelta_n(\hz,\hp)|\\
&\leq C   \min_{1\leq j\leq n} \bigg( \sum_{i=0}^{j-1}d(\hsigma^i(\hx),\hsigma^i(\hz))^\gamma+d(\hsigma^i(\hy),\hsigma^i(\hp))^\gamma+\\
&\hspace{2cm}\sum_{i=j}^{n-1}d(\hsigma^i(\hx),\hsigma^i(\hy))^\gamma+d(\hsigma^i(\hz),\hsigma^i(\hp))^\gamma  \bigg),
\end{align*} 
where the first summand in the latter expression is related to distances of points
that are not necessarily in the same stable manifold and the second summand to distances of points
in the same stable manifold.
Now, since $\hsigma$ is Lipschitz with Lipschitz constant $L=e^{\chi/2}$, it follows that
\begin{align*}
&\left|\log \hDelta(\hx,\hy)-\log \hDelta(\hz,\hp)\right|\\
&\leq C  \min_{n\in \natural} \left( \frac{L^{n\gamma}-1}{L^\gamma-1}\left(d(\hx,\hz)^\gamma+d(\hy,\hp)^\gamma\right)+\frac{2e^{-\chi \gamma n/2}}{1-e^{-\chi\gamma/2}}\right).
\end{align*}
Given $\epsilon>0$ if we take $n$ large enough such that
$2C\frac{e^{-\chi \gamma n/2}}{1-e^{-\chi\gamma/2}}\leq \epsilon$ and $C(\epsilon)=2 C \frac{L^{\gamma n}-1}{L^\gamma-1}$, then the claim follows. Consequently, using \eqref{eq: bounds on gamma} and observing that $\hDelta=e^{\log \hDelta}$ we complete the proof of the proposition.
\end{proof}

Fix a cylinder $[R]\subset \hSigma$ and let $\hmu^s_{\hx^+}$ be the measure on $\{\hx^+\} \times \hSigma^-$ given by Lemma~\ref{l.disintegration}. Recalling that $\mu^s_p=\psi \, m^s_p$,
take $\hpsi(\hx)=\psi[\hpi(\hx)]$. We normalize the map $\hpsi$ so that
$\displaystyle\int_{\hSigma^-}\hpsi(\hx)d \hm^s_{\hx^+}=1$. By \eqref{eq.density} it follows that
$$
\hpsi(\hy)= \frac{1}{ \displaystyle\int_{\hSigma^-}\hDelta(\hy,(\hx^-,\hy^+))d\hm_{\hx^+}^s(\hx^-)},\ \forall\hy \in \hSigma.
$$
In particular, Lemma \ref{l.Delta} implies that $\hpsi:[R]\to \real_+$ is uniformly continuous and bounded away from zero and infinity.

Let $\widetilde{R}:=\hpi([R])\subset M$. By Lemma \ref{l.product}, $\widetilde{R}$ has local product structure.
Working again in the coordinates $\cN^u\times\cN^s$, Rokhlin's Disintegration Theorem gives that 
$$\mu|_{\widetilde{R}}=\int_{\cN^u} \psi(x) m^s_{x^u}\times \nu^u $$
where $m^s_{x^u}$ is the volume measure in the manifold $W^s(x^u):=\lbrace x^u \rbrace \times \cN^s$. 
Moreover, in the product structure coordinates the unstable holonomy $h^u_{x^u,y^u}:W^s(x^u)\to W^s(y^u)$
is a map
$h^u_{x^u,y^u} \colon \lbrace x^u \rbrace \times \cN^s\to\lbrace y^u \rbrace \times \cN^s$ given by 
$h^u_{x^u,y^u}(x^u, x^s)=(y^u,x^s)$.
By the absolute continuity of the unstable foliation $(h^u_{x^u,y^u})_*m^s_{x^u}=Jh^u_{x^u,y^u} m^s_{y^u}$,
where $Jh^u_{x^u,y^u}$ is given by (see \cite{PSh89}): 
$$
Jh^u_{x^u,y^u}(x^s)=\lim_{n\to \infty} \frac{JDf^{-n}|_{E^s(x^u,x^s)}}{JDf^{-n}|_{E^s(y^u,x^s)}}\cdot
$$
We now fix some $(y^u,y^s)\in \cN^u\times \cN^s$ and define 
$\phi:\tR\to\real$ by $\phi(x^u,x^s)=Jh^u_{x^u,y^u}(x^s)$.
Then $\mu_{\tR}=\psi\phi\times(m^s_{y^u}\times \nu^u)$. Moreover, considering
$\hphi:[R]\to \real$, $\hphi(\hx):=\phi[\hpi(\hx)]$, 
where $\hpi(\hx)=\hpi(\hx^+,\hx^-)=(x^u,x^s)$, an analogous calculation to the one did in the
proof of Lemma~\ref{l.Delta} shows that $\hphi$ is uniformly continuous and bounded away from zero and infinity.
By Corollary \ref{c.muproduct} and the equality $\rho=\psi\phi$, we obtain
that $\hrho=\hpsi\hphi$ and so $\hrho$ is also uniformly continuous and bounded away from zero and infinity.
This concludes the proof of Proposition~\ref{p.gibbsproduct}. 

To conclude the proof of Proposition~\ref{p.semiconj}, it remains to show that
$\hmu$ can be chosen to be ergodic. This follows from the next lemma.

\begin{lemma}
Almost every ergodic component of $\hmu$ projects to $\mu$ and has continuous product structure.
\end{lemma}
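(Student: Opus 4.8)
The statement to prove is that almost every ergodic component of $\hmu$ projects to $\mu$ and inherits the continuous product structure. The projection part is essentially already recorded in the discussion following \eqref{eq: def extension of mu}: since $\hpi_*\hmu = \mu$ and $\mu$ is ergodic, pushing forward the ergodic decomposition $\hmu = \int \tmu \, d\mathbb{P}(\tmu)$ under $\hpi$ gives $\mu = \int \hpi_*\tmu \, d\mathbb{P}(\tmu)$, an expression of $\mu$ as an average of $f$-invariant measures; each $\hpi_*\tmu$ is $f$-invariant (as $\hpi\circ\hsigma = f\circ\hpi$), and by uniqueness of the ergodic decomposition of the ergodic measure $\mu$ we must have $\hpi_*\tmu = \mu$ for $\mathbb{P}$-almost every $\tmu$. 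So the work is entirely in the second assertion.

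\textbf{Continuous product structure of ergodic components.} The key point is that continuous product structure is a statement about a \emph{density} being continuous and bounded, and this density is the \emph{same} for $\hmu$ and for $\tmu$. By Proposition~\ref{p.gibbsproduct} we know $\hmu\mid_{[R]} = \hrho \,(\hnu^s\times\hnu^u)$ with $\hrho(\hx) = \rho(\hpi(\hx))$ uniformly continuous and bounded away from $0$ and $\infty$, where $\hnu^s = P^-_*\hmu$, $\hnu^u = P^+_*\hmu$. Fix an ergodic component $\tmu$. First I would argue that $\tmu$ is absolutely continuous with respect to $\hmu$: this is immediate since $\tmu \ll \hmu$ always holds for components of the ergodic decomposition. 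Then on each cylinder $[R]$ the Radon--Nikodym derivative $g_R := \frac{d\tmu\mid_{[R]}}{d\hmu\mid_{[R]}}$ exists, and I would show that (after the usual normalization) $\tmu\mid_{[R]} = g_R \cdot \hrho \, (\hnu^s\times\hnu^u)$. The content is to check that $g_R$ factors through the product coordinates, i.e. $g_R(x^+,x^-) = g_R^s(x^-)\, g_R^u(x^+)$ up to a multiplicative constant. This follows from invariance: $\tmu$ and $\hmu$ are both $\hsigma$-invariant, so the density $\frac{d\tmu}{d\hmu}$ is $\hsigma$-invariant, and combined with the already-established product structure of $\hmu$ one deduces that the conditional densities of $\tmu$ along local stable (resp.\ unstable) sets agree with those of $\hmu$ up to a factor depending only on the transverse coordinate. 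Concretely, disintegrating $\tmu\mid_{[R]}$ along local stable sets via Rokhlin and comparing with Lemma~\ref{l.disintegration}, the stable conditionals of $\tmu$ are the same probability measures $\hmu^s_{x^+}$ as those of $\hmu$ (they are determined by $\mu^s_{\hpi^+(x^+)}$, which does not see $\tmu$ versus $\hmu$), so the only freedom is in the transverse measure $P^+_*(\tmu\mid_{[R]})$, which is $\ll \hnu^u$. Writing $\tnu^u := P^+_*\tmu$, I get $\tmu\mid_{[R]} = \hrho \,(\hnu^s \times \tnu^u)$ with the \emph{same} density $\hrho$; renormalizing the marginals to $\tmu$'s own stable/unstable projections only rescales $\hrho$ by a function of one variable, which does not affect uniform continuity or the bounds. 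Hence $\tmu$ has continuous product structure.

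\textbf{Main obstacle.} The delicate step is justifying that the stable conditional measures of the ergodic component $\tmu$ coincide with those of $\hmu$, rather than merely being absolutely continuous with a possibly wild density. This is where one genuinely uses that the stable conditionals of $\hmu$ are already canonically pinned down by $\mu$ through Lemma~\ref{l.disintegration} (they come from the $s$-Gibbs property, i.e.\ from $\Delta$), so they cannot be perturbed within an ergodic decomposition. One should phrase this carefully: almost every ergodic component has stable conditionals which, for $\hmu$-a.e.\ point, equal $\hmu^s_{x^+}$, because the conditional measures of $\hmu$ itself are the $\mathbb{P}$-average of the conditional measures of its components along the \emph{same} measurable partition into local stable sets, and a.e.\ component's conditionals are thus $\hmu$-a.e.\ equal to $\hmu$'s. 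Once that is in hand, the boundedness and uniform continuity of $\hrho$ transfer verbatim, and the lemma — and with it Proposition~\ref{p.semiconj} — follows.
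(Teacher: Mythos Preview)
Your proposal has a genuine gap at the step you call ``immediate'': the assertion that ergodic components satisfy $\tmu \ll \hmu$ is \emph{false} in general. For a simple counterexample, take the product of an irrational circle rotation with the identity on $[0,1]$ and Lebesgue measure; every ergodic component is supported on a single circle, hence singular with respect to the two-dimensional measure. So absolute continuity of components is not a general fact about ergodic decompositions --- it is precisely the content that has to be proved here, and your subsequent Radon--Nikodym argument collapses without it. The paragraph you flag as the ``main obstacle'' (equality of stable conditionals) is downstream of the same unproved claim and the justification you offer there --- that the conditionals of $\hmu$ are an average of those of its components, hence the components' conditionals equal $\hmu$'s --- is a non sequitur: an average equalling a fixed value does not force the summands to equal that value.

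The paper's route is different and simpler. It runs a Hopf argument inside each cylinder: since $\hmu$ has product structure on $[R]$, Birkhoff averages are constant along local stable and along local unstable sets, hence $\hmu$-a.e.\ point of $[R]$ lies in the same ergodic equivalence class. Thus the ergodic decomposition of $\hmu$ is \emph{atomic}, supported on unions of cylinders, and each component $\hmu_j$ is just $\frac{1}{\hmu(\Gamma_j)}\,\hmu\!\mid_{\Gamma_j}$. On any cylinder of positive $\hmu_j$-measure this is a constant multiple of $\hmu\!\mid_{[R]}$, so the density $\hrho$ is unchanged and continuous product structure is immediate. In other words, the Hopf argument simultaneously \emph{proves} the absolute continuity you assumed and makes the remaining density analysis unnecessary. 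If you want to salvage your approach, you would need to insert exactly this argument (invariant density $\Rightarrow$ constant on stable and unstable sets $\Rightarrow$ constant on cylinders) to justify $\tmu \ll \hmu$; once you do, the rest of your proof becomes redundant.
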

\begin{proof}
 By \cite[Proposition~13.2]{Sa13} almost every ergodic component of $\hmu$ projects to $\mu$. We claim that the ergodic decomposition of $\hmu$ is actually a sum of restrictions of $\hmu$ to union of cylinders. Indeed, let $\hSigma_0\subset \hSigma$ be a full $\hmu$-measure subset on which the Birkhoff theorem 
holds for every continuous function. Divide $\hSigma_0$ into equivalence classes: $\hx \sim \hy$ if the Birkhoff averages are the same for every continuous function. For any cylinder $[R]$, using a Hopf argument we conclude that $\hmu$-almost every point in $[R]$ is in the same equivalence class. So, every equivalence class is a union of cylinders, modulo sets of zero measure. In particular, there are at most countably many classes, which we denote by $\Gamma_j$, $j\in \natural$. Hence $\hmu=\sum_{j}\hmu_j$ where
every $\hmu_j:=\frac{1}{\hmu(\Gamma_j)}\hmu|_{\Gamma_j}$ is ergodic. Moreover, the restriction of $\hmu_j$ to a cylinder 
of positive measure is a multiple of the restriction of $\hmu$ to this cylinder. In particular, every $\hmu_j$ has continuous product structure.
\end{proof}

The proof of Proposition \ref{p.semiconj} is now complete.

\section{Simplicity is typical}

In this section we conclude the proof of Theorem \ref{theo: main}. We begin recalling a criterion established by Avila and Viana in \cite{AvV1} that guarantees the simplicity of the Lyapunov spectrum of a fiber-bunched cocycle over
a topological Markov shift. 
 
\subsection{Simplicity Criterion} Let $\hp\in\hSigma$ be a $\hsigma$-periodic point with period $q\geq 1$.
A point $\hz\in W^u_{\rm loc}(\hp)$ is called \textit{homoclinic} if there exists some multiple $\ell\geq 1$ of $q$ such that $\hsigma^\ell(\hz)\in W^s_{\rm loc}(\hp)$. The \textit{transition map} $\psi^{\hA}_{\hp,\hz}:\field^d\to \field^d$ is defined by
$$
 \psi^{\hA}_{\hp,\hz}=H^{s,\hA}_{\hsigma^l(\hz)\hp}\circ \hA^\ell(\hz)\circ H^{u,\hA}_{\hp\hz}.
$$
\begin{definition}\label{d:simple}
A cocycle $\hA: \hSigma \to GL(d,\mathbb K)$ is \emph{simple} if there exists a $\hsigma$-periodic point $\hp\in \hSigma$ of period $q\ge 1$ and some homoclinic point $\hz \in W_{\rm loc}^u(\hat p)$ such that:
\begin{enumerate}[i)]
 \item[(P)] all eigenvalues of $\hA^q(\hp)$ have distinct absolute values;
 \item[(T)] for any invariant subspaces (sums of eigenspaces) $E$ and $F$ of $\hA^q(\hp)$ with $\dim E+\dim F=d$,  it holds $\psi^{\hA}_{\hp,\hz}(E)\cap F=\lbrace 0\rbrace$.
\end{enumerate}
\end{definition}

Property (P) is called \textit{pinching} and property (T) is called \textit{twisting}.
It was proved in \cite[Theorem~A]{AvV1} that pinching and twisting imply that the
Lyapunov spectrum is simple. More precisely,

\begin{theorem}[Theorem A of \cite{AvV1}]\label{thm:AV}
If $\hA: \hSigma \to GL(d,\mathbb K)$ is simple then the cocycle generated by $\hA$ over $\hsigma$ has simple Lyapunov spectrum.
\end{theorem}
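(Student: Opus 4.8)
The plan is to obtain the statement as a direct application of the simplicity criterion of Avila and Viana, \cite[Theorem~A]{AvV1}; the task thus reduces to checking that the symbolic model built in Proposition~\ref{p.semiconj} satisfies the standing hypotheses of that criterion. These hypotheses are: (a) the base is a topological Markov shift carrying an invariant ergodic probability measure with product structure whose density is continuous and bounded away from $0$ and $\infty$ (the \emph{bounded distortion} assumption in \cite{AvV1}); (b) the linear cocycle is H\"older continuous and admits stable and unstable holonomies; and (c) there exist a periodic point $\hp$ and a homoclinic point $\hz\in W^u_{loc}(\hp)$ at which the pinching and twisting conditions hold. Item (a) is exactly Proposition~\ref{p.semiconj}(i), established through Proposition~\ref{p.gibbsproduct}; item (b) is Proposition~\ref{p.semiconj}(ii); and item (c) is the hypothesis that $\hA$ is \emph{simple}. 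Granting these, \cite[Theorem~A]{AvV1} applies and gives that the cocycle generated by $\hA$ over $\hsigma$ has simple Lyapunov spectrum (with respect to the measure $\hmu$).

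For completeness I would recall why (P) and (T) force the conclusion in \cite{AvV1}. For each $1\le k\le d-1$ one passes to the cocycle induced by $\hA$ on the Grassmannian bundle $\hSigma\times\mathrm{Gr}(k,d)$ and studies its $u$-states and $s$-states, i.e. the probability measures on this bundle that project to $\hmu$ and are invariant under the unstable, respectively stable, holonomies. By the invariance principle such states exist, and each is determined by its restriction over the periodic orbit of $\hp$. Pinching (P) forces $\hA^q(\hp)$ to have $d$ one-dimensional eigenspaces with pairwise distinct eigenvalue moduli, so over that orbit the only stationary fiber measures are the atoms at the finitely many coordinate $k$-planes; twisting (T), carried along by the transition map $\psi^{\hA}_{\hp,\hz}$, discards all of them but the $k$-plane spanned by the $k$ dominant eigendirections in the case of the $u$-state, and dually for the $s$-state. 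Uniqueness together with mutual transversality of the $u$- and $s$-states is, by the classical Furstenberg-type argument, equivalent to the strict gap $\lambda_k(\hA,\hmu)>\lambda_{k+1}(\hA,\hmu)$; letting $k$ run through $1,\dots,d-1$ produces $d$ distinct Lyapunov exponents, which is simplicity.

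The only point that genuinely requires care is hypothesis (a): \cite{AvV1} demands that the disintegrations of $\hmu$ along local stable and unstable sets have densities that are \emph{continuous and uniformly bounded} (with bounds independent of the cylinder), which is strictly stronger than mere absolute continuity of the original $u$-Gibbs measure $\mu$. This is precisely why the preceding section was devoted to proving that the \emph{lift} $\hmu$ of $\mu$ --- and not just $\mu$ itself --- has continuous product structure. Granted Proposition~\ref{p.gibbsproduct}, the remaining ingredients of \cite[Theorem~A]{AvV1} are immediate from Proposition~\ref{p.semiconj}, so there is no further obstacle and the theorem follows.
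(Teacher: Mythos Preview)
Your proposal is correct and matches the paper's treatment: the paper does not prove Theorem~\ref{thm:AV} but simply quotes it as \cite[Theorem~A]{AvV1}, and the paragraph immediately following the statement makes exactly the observation you do---that the Appendix of \cite{AvV1} shows a topological Markov shift with a measure of continuous product structure suffices, which is what Proposition~\ref{p.semiconj} provides. Your second paragraph sketching the Grassmannian/$u$-state mechanism behind \cite{AvV1} is additional (accurate) detail that the paper does not include, but the approach is the same.
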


Observe that, in order to apply Theorem \ref{thm:AV}, we only need $(\hSigma,\hsigma)$ to be a topological Markov shift and the measure $\hmu$ to have continuous product structure in the sense of Definition \ref{def: continuous prod struct}. This fact follows from \cite[Appendix A.1]{AvV1}.
In our context, Proposition \ref{p.semiconj} establishes that this is exactly the case for the topological
Markov shift $(\hsigma, \hmu)$ and measure $\hmu$ induced by the pair $(f,\mu)$ satisfying the
hypothesis of Theorem \ref{theo: main}.
 
\subsection{Conclusion of the proof of Theorem \ref{theo: main}} 
Keeping all the notation introduced in the previous sections, we will prove that
$\hA$ satisfies Definition \ref{d:simple}.
Before going into the proof, let us remind part (3) of Theorem \ref{theo: Markov partitions}:
there exists a function $\varphi:\mathfs R\to\mathbb N$ such that if $x=\hpi(\hx)$ with $R_n=R$ for infinitely many
$n>0$ and $R_n=S$ for infinitely many $n<0$ then $\#\{\hy\in \hSigma^\#:\hpi(\hy)=x\}\leq \varphi(R)\varphi(S)$.



\begin{proposition}\label{p.pintwistdense}
Let $f:M\to M$ and $A:M\to GL(d,\mathbb K)$ be as in Theorem \ref{theo: main}. Then, there exists $B:M\to GL(d,\mathbb K)$ $C^{r,\alpha}$-arbitrarily close to $A$ such that $\hB=B\circ \hpi$ is simple.
\end{proposition}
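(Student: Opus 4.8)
The plan is to produce the desired perturbation $B$ in two stages, first achieving pinching at a well-chosen periodic point and then achieving twisting, keeping the perturbations localised so that fiber-bunching is preserved. To begin, I would fix a $\hsigma$-periodic point $\hp\in\hSigma^\#$ of period $q$ together with a homoclinic point $\hz\in W^u_{loc}(\hp)$ with $\hsigma^l(\hz)\in W^s_{loc}(\hp)$ for some multiple $l$ of $q$; such a pair exists because $\hSigma$ is a topological Markov shift whose graph is strongly connected on the (nonempty) support of $\hmu$, so there are periodic points through any cylinder of positive measure and one can close up an orbit to obtain a homoclinic intersection. The key point is to choose $\hp$ and $\hz$ so that $p=\hpi(\hp)$ avoids the (measure-zero, but here we only need a single good orbit) set where the combinatorics degenerate, and so that by Theorem~\ref{t.bound} the fiber $\abs{\hpi^{-1}(p)}$ is finite and bounded; this matters because the perturbation of $A$ on $M$ must be compatible with the fact that several symbolic points may project to the same base point.

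The first stage is to perturb $A$ to make $\hB^q(\hp)$ pinching, i.e.\ to arrange property (P). Here I would perturb $A$ supported in a small ball around one point of the finite orbit $\{f^i(p)\}_{i=0}^{q-1}$, chosen to be disjoint from the rest of the orbit and from the base points of the homoclinic segment, composing $A$ there with a matrix close to the identity. Since $\hB^q(\hp)$ depends on the values of $A$ along the orbit of $p$, such a perturbation changes $\hB^q(\hp)$ by right (or left) multiplication by an arbitrary matrix near $\mathrm{Id}$; the set of matrices with all eigenvalues of distinct absolute value is open and dense, so we can achieve (P) by an arbitrarily small $C^{r,\alpha}$-perturbation. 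One must check that this perturbation stays inside $\mathcal B^{r,\alpha}_\chi(M)$: since fiber-bunching \eqref{def:fiberbunching} is a $C^0$-open condition and the perturbation is $C^{r,\alpha}$-small, this is automatic.

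The second stage, which I expect to be the main obstacle, is to perturb further to obtain the twisting condition (T) while preserving (P). The natural move is to perturb $A$ in a small neighbourhood of a base point $f^j(\hpi(\hz))$ lying on the homoclinic orbit segment and disjoint from the orbit of $p$ (and from itself under the finitely many relevant iterates), so that only the factor $\hB^l(\hz)$ in the transition map $\psi^{\hB}_{\hp,\hz}=H^{s,\hB}_{\hsigma^l(\hz)\hp}\,\hB^l(\hz)\,H^{u,\hB}_{\hp\hz}$ is affected, modifying it by multiplication by a matrix near $\mathrm{Id}$; importantly this leaves $\hB^q(\hp)$, hence (P), untouched. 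The delicate part is that the holonomies $H^{s,\hB}$ and $H^{u,\hB}$ also vary with the perturbation, but if the support is disjoint from $\hp$'s orbit the infinite-product formulas for the holonomies are continuous in $A$ in the relevant sense, so $\psi^{\hB}_{\hp,\hz}$ still ranges over a full neighbourhood of $\psi^{\hA}_{\hp,\hz}$ composed with an arbitrary near-identity matrix. Then twisting holds provided that this near-identity matrix can be chosen to move the finitely many pairs $(E,F)$ of complementary invariant subspaces of $\hB^q(\hp)$ off each other; since for each such pair the set of $g\in GL(d,\field)$ with $g(E)\cap F\neq\{0\}$ is a proper algebraic subvariety, a generic arbitrarily small perturbation avoids their finite union, yielding (T). The remaining subtlety — that the perturbation on $M$ must be a genuine function $B:M\to GL(d,\field)$, i.e.\ the same near every preimage — is handled because we only perturb near a single base point and assign there the value dictated by the desired modification at the one symbolic point $\hz$; any other preimages of that base point are harmless since they do not lie on the orbit segments defining $\hB^q(\hp)$ or $\psi^{\hB}_{\hp,\hz}$. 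Combining both stages gives $B$ arbitrarily $C^{r,\alpha}$-close to $A$ with $\hB=B\circ\hpi$ simple.
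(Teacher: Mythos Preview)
Your outline is essentially the paper's argument when $\field=\mathbb{C}$, and for that case it is correct: perturb near $p$ to get (P), then perturb near $z$ on the homoclinic segment to get (T), noting that the relevant holonomies are unaffected (in the paper this is phrased by choosing $V$ so that $f^{nq}(z)\notin V$ for $n\neq 0$, which makes the holonomy products literally unchanged rather than merely continuous).

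The genuine gap is the case $\field=\mathbb{R}$. Your pinching step relies on the assertion that matrices with all eigenvalues of distinct absolute value are \emph{dense} in $GL(d,\field)$; this is true over $\mathbb{C}$ but false over $\mathbb{R}$. A real matrix with a pair of nonreal conjugate eigenvalues has two eigenvalues of equal modulus, and this persists under every small real perturbation (the complex eigenvalues vary continuously and remain nonreal). So a single local perturbation at $p$ cannot in general produce (P), and your Stage~1 fails as stated.

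The paper's remedy is substantially different from a local perturbation. One first secures twisting (same local argument as yours), then uses the dominated splitting induced by the holonomies to build a horseshoe $H\ni\hp,\hz$ and a one-parameter family $A_{\delta,t}=R_{t\delta}A$ where $R_{t\delta}$ rotates the offending $2$-plane $E^j(\hp)$. Along a sequence of periodic points $\hx_n\in H$ of symbolic period $nq+l$ shadowing the homoclinic loop, the rotation number of $A_{\delta,t}^{\,\per(x_n)}(x_n)$ on $E^j(\hx_n)$ grows linearly in $n$, so for large $n$ some small $t$ forces a real repeated eigenvalue, which a further tiny perturbation splits. Here Theorem~\ref{t.bound} is not a side remark but the crux: since $\hpi$ is finite-to-one with a bound $m=\varphi(u,u)$ on the relevant fibers, one gets $\per(x_n)\ge (nq+l)/m$, which is what guarantees the linear growth of the rotation number on $M$ (not just on $\hSigma$). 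Your plan invokes Theorem~\ref{t.bound} only to say fibers are finite, which is not where it is actually needed.
\bigskip
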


\begin{proof}
Assume first that $\field =\mathbb{C}$. Let $\hp \in \hSigma$ be a $\hsigma$-periodic point with period $q$, let $\hz \in W_{\rm loc}^u(\hp)$ be a homoclinic point such that $\hsigma^\ell(\hz)\in W^s_{\rm loc}(\hp)$ and consider $p=\hpi(\hp)$ and $z=\hpi(\hz)$. Since $A$ takes values in $\GLC$,
we can perform a small $C^{r,\alpha}$-perturbation of $A$ on a neighborhood of $p$ such that
the perturbed map $B':M\to \GLC$ is arbitrarily close to $A$ and all eigenvalues of $B'^q(p)$ have distinct absolute values. In particular, $\widehat{B'}=B'\circ \hpi $ satisfies the pinching condition.
 
To obtain the twisting property, we begin observing that $\hpi[\hsigma^{nq}(\hz)]\neq \hpi[\hsigma^{jq}(\hz)]$ for every pair of distinct integers $n,j$. Indeed, if this is not the case then there are $n\neq j$ such that
$\hpi[\hsigma^{nq}(\hz)]=\hpi[\hsigma^{jq}(\hz)]$; using that $\hpi \circ \hsigma^\ell = f^\ell \circ \hpi$ for all
$\ell\in\mathbb Z$, we get that $f^{q(j-n)}(z)=z$ and so is $z$ is periodic, which contradicts
the choice of $\hz$. Thus, there exists a small neighborhood $V\subset M$ of $z$ such that
$\hz\notin\hsigma^{nq}[\hpi^{-1}(V)]$ for every $n\neq 0$. In particular, modifying $B'$ in this neighborhood $V$ does not change neither of the holonomies $H^{s,\hB'}_{\hsigma^\ell(\hz)\hp}$ and $H^{u,\hB'}_{\hp\hz}$.
In particular, 
$\psi^{\hB}_{\hp,\hz}=H^{s,\hB'}_{\hsigma^\ell(\hz)\hp}\circ {B}^\ell(z)\circ H^{u,\hB'}_{\hp\hz}$ for any $C^{r,\alpha}$-perturbation 
$B:M\to \GLC$ of $B'$ that is supported on $V$. Consequently, there is a cocycle $B$ that is $C^{r,\alpha}$-arbitrarily close to $B'$ , coincides with $B'$ outside $V$, and such that $\psi^{\hB}_{\hp\hz}$ does not preserve the invariant subspaces of $\hB^q(\hp)$, hence $\hB$ has the twisting property. Noting that, since $B$ and $B'$ coincide
outside $V$, $\hB$ still satisfies the pinching condition. This concludes the proof of the proposition
when $\field =\mathbb{C}$. 

Now assume that $\field =\mathbb{R}$. The first perturbation performed in the case $\field=\mathbb C$
can also be performed when $\field =\mathbb{R}$, therefore we can assume that 
$\hA$ already satisfies the twisting property at a periodic point $\hp$ of period $q\ge 1$.
The difficulty is to obtain the pinching property, because there may exist pairs of complex eigenvalues.
To bypass this issue, we explain how to adapt ideas from \cite[Section~9]{BoV04} to our context.

After a small perturbation, if necessary, we can assume that there exists a splitting 
$\mathbb R^d=E^1(\hp)\oplus \cdots \oplus E^k(\hp)$ into invariant subspaces of $A^{q}(\hpi(\hp))$,
where each $E^j(\hp)$ is a one or two dimensional eigenspace and the eigenvalues corresponding 
to different subspaces have different absolute values. This perturbation can be done in a way that 
$\hA$ still satisfies the twisting condition at $\hp$. 
If all subspaces $E^j(\hp)$ are one dimensional, then we are done. So, let us assume that $\dim E^j(\hp)=2$
for some $j$, that is $E^j(\hp)$ is associated to a complex eigenvalue of $A^{q}(\hpi(\hp))$.
Since the cocycle $\hA$ admits stable and unstable holonomies and has the twisting property
at $\hp$, there exists a horseshoe $H$ containing $\hp$ and $\hz$ and a dominated decomposition $E^1\oplus \cdots \oplus E^k$ over $H$ that extends $E^1(\hp)\oplus \cdots \oplus E^k(\hp)$, see \cite[Section~9]{BoV04}. 
For $t\in [0,1]$, let $R_{t\delta }: M \to GL(d,\mathbb R)$ be a $C^\infty$ cocycle on $M$ 
such that the matrix $R_{t\delta}(p)$ is a rotation of angle $t\delta$ when restricted to the plane
$E^j(\hp) \subset \mathbb R^d$, and is the identity map  when restricted to the other subspaces
$E^i(\hp)$, $i\neq j$. Consider the continuous
family of cocycles
$A_{\delta ,t} :=R_{t\delta }A \in \mathcal B_\chi^{r,\alpha}(M)$. 

By the symbolic dynamics, there exists a sequence   $(\hx_n)_n$ of periodic points of $H$ such that each $\hx_n$ has period $nq+\ell$, the points $\hsigma^{i}(\hx_n)$ and $\hsigma^{i}(\hz)$ are close for every $0\leq i\leq \ell$,
and the points $\hsigma^{\ell+i}(\hx_n)$ and $\hsigma^i(\hp)$ are close for every $0\leq i\leq qn$. 

As we will perturb the cocycle generated by $A:M\to GL(d,\mathbb{R}) $ over $f$, we need real eigenvalues for the perturbation of $A^{\per(x_n)}(x_n)$, where $x_n:=\hpi(\hx_n)$. The point $x_n$ is clearly periodic for $f$. If the projection $\hpi$ restricted to the orbit  $\lbrace\hsigma^j(\hx_n),\,j\geq 1\rbrace$ is injective, then the argument given in \cite[Section~9]{BoV04} works directly in our context. If not, we need to estimate the period of $x_n$. 

If $u\in \cV$ denotes the symbol such that $\hp\in [u]$ then the symbol $u$ appears infinitely many times
in the coding of the periodic point $\hx_n$ and, by part (3) of Theorem~\ref{theo: Markov partitions}, $\hpi^{-1}(x_n)$ has cardinality less than or equal to $m:=\varphi(u)^2<\infty$. Therefore, the period of $x_n$ satisfies
$\per(x_n)\geq \frac{nq+\ell}{m}$. Now, the argument of \cite[Section~9]{BoV04} shows that the variation of the rotation number of $A^{\per(x_n)}_{\delta,t}(x_n)$ is at least $\frac{nt\delta}{2 m}$. Thus, for $n$ sufficiently large we can find $t$ close to $0$ so that $A^{\per(x_n)}_{\delta,t}(x_n)$ has a real eigenvalue of multiplicity $2$ in the plane $E^j(\hx_n)$. Then, making an extra small 
$C^{r,\alpha}$-perturbation near the point $x_n$, we obtain two different real eigenvalues on $E^j(\hx_n)$.

Repeating this process a finite number of times (in fact, no more than $d$ times)
we find a cocycle $B$ close to $A$ and a periodic point $\hp\in \hSigma$ that has both
the twisting and pinching properties.
\end{proof}

Proposition \ref{p.pintwistdense} implies that the set of cocycles $B\in \mathcal B_\chi^{r,\alpha}(M)$   
for which $\hB$ has the pinching and twisting conditions is dense in $\mathcal B_\chi^{r,\alpha}(M)$. We now observe that this set is also open. Consider the map
$\hpi^*: \mathcal B_\chi^{r,\alpha}(M) \to H^{\alpha}(\hSigma)$ given by $\hpi^*(A)=\hA=A\circ \hpi$.
This map is Lipschitz continuous, since the $C^{0,\alpha}$-norm of $\hpi^* A$ on $\hSigma$
is bounded by
\begin{align*}
&\norm{\hpi^* A}_{0,\alpha}=\sup_{\hx\in\hSigma}{\norm{\hA(\hx)}}+\sup_{\hx,\hy\in\hSigma\atop{\hx\neq\hy}}\frac{\norm{\hA(\hx)-\hA(\hy)}}{d(\hx,\hy)^\alpha}\\
&=\sup_{x\in M}\norm{A(x)}+\sup_{\hx,\hy\in\hSigma\atop{\hpi(\hx)\neq\hpi(\hy)}}
\left(\frac{\norm{A(\hpi(\hx))-A(\hpi(\hy))}}{\d(\hpi(\hx),\hpi(\hy))^\alpha}\right)\left(\frac{\d(\hpi(\hx),\hpi(\hy))}{d(\hx,\hy)}\right)^\alpha\\
&\leq \max\lbrace{1,\text{Lip}(\hpi)^{\alpha}\rbrace} \norm{A}_{0,\alpha}.
\end{align*}
Since the pinching and twisting conditions are open conditions in $H^{\alpha}(\hSigma)$, given a cocycle 
$B\in \mathcal B_\chi^{r,\alpha}(M)$ such that $\hpi^*B$ satisfies these two conditions it follows that if 
$B'\in \mathcal B_\chi^{r,\alpha}(M)$ is $C^{r,\alpha}$ sufficiently close to $B$, then $\hpi^*B'$ also satisfies
the pinching and twisting conditions, because $\hpi^*B'$ is close to $\hpi ^{\ast}B$. This proves our claim.

Since $(A,f,\mu)$ and $(\hA, \hsigma ,\hmu)$ have the same Lyapunov spectrum, an application
of Theorem \ref{thm:AV} implies that $\mathcal B_\chi^{r,\alpha}(M)$ contains
an open and dense subset of cocycles with simple spectrum, thus
concluding the proof of the Theorem \ref{theo: main}.

\appendix
\section{Symbolic dynamics for non-uniformly hyperbolic diffeomorphisms (by Yuri Lima)} \label{appendix}

Here we give, as much as possible, a brief sketch of the proof of Theorem \ref{theo: Markov partitions},
as well as a self contained proof of Lemma \ref{l.product}, based on the work
of Sarig \cite{Sa13} and of Ben Ovadia \cite{Ova16}. We eventually refer the reader to \cite{LS16} and \cite{LM16},
where some of the arguments
have been simplified and/or are better explained. Let $f:M\to M$ be a $C^{1+\beta}$ diffeomorphism on a closed
smooth Riemannian manifold $M$ of dimension $d$, and let $\chi>0$.

\medskip
\noindent
{\sc $\chi$--hyperbolic measure:} An $f$--invariant probability measure $\mu$ on $M$
is called {\em $\chi$--hyperbolic measure} if $\mu$--almost surely all of its 
Lyapunov exponents are in $\mathbb R\backslash[-\chi,\chi]$.

\medskip
We restate Theorem \ref{theo: Markov partitions} below.

\begin{theorem}[\cite{Sa13,Ova16}]
Let $f:M\to M$ be a $C^{1+\beta}$ diffeomorphism. For each $\chi>0$, there exists a locally compact topological Markov shift $(\hSigma,\hsigma)$ and a H\"older continuous map $\hpi:\hSigma\rightarrow M$ such that: \begin{enumerate}[{\rm (1)}]
    \item $\hpi\circ\hsigma=f\circ\hpi$.
    \item $\hpi[\hSigma^\#]$ has full measure for every $\chi$-hyperbolic measure.
    \item Every $x\in \hpi_\chi[\hSigma^\#]$ has finitely many pre-images in $\hSigma^\#$. More specifically:
    there is $\varphi:\mathfs R\to\mathbb N$ such that if $x=\hpi(\hx)$ with $R_n=R$ for infinitely many
    $n>0$ and $R_n=S$ for infinitely many $n<0$ then $\#\{\hy\in \hSigma^\#:\hpi(\hy)=x\}\leq \varphi(R)\varphi(S)$.
    \item For every $\chi$-hyperbolic measure $\mu$, there exists a $\hsigma$-invariant measure $\hmu$
    such that $\hpi_*\hmu=\mu$.
    \end{enumerate}
\end{theorem}

\medskip
Given $r>0$, let $R[r]:=[-r,r]^d\subset\mathbb R^d$. We recap some definitions of \cite{Sa13}.
In the sequel, we fix $\varepsilon>0$ sufficiently small. Let $x\in M$ be a {\em regular point}
in the sense of the Oseledets theorem, and assume that the Lyapunov exponents of $f$ at $x$
are in $\mathbb R\backslash[-\chi,\chi]$. The {\em Pesin chart of $f$ at $x$}
is a map $\Psi_x:R[Q_\varepsilon(x)]\to M$, where $Q_\varepsilon(x)>0$ is a parameter
depending on the non-uniform hyperbolicity of $f$ at $x$. See \cite[Definition 1.1.12]{Ova16}.
In Pesin charts, the map $f$ becomes uniformly hyperbolic, see \cite[Theorem 1.1.13]{Ova16}. Given $\eta\leq Q_\varepsilon(x)$,
let $\Psi_x^\eta:R[\eta]\to M$ denote the restriction of $\Psi_x$ to $R[\eta]$.

\medskip
\noindent
{\sc Double Pesin chart:} A {\em double Pesin chart} is a pair $\Psi_x^{p^s,p^u}=(\Psi_x^{p^s},\Psi_x^{p^u})$
of Pesin charts with the same center $x$. See \cite[Section 4.1]{Sa13}.

\medskip
The parameters $p^s,p^u$ represent definite choices of sizes for
the stable and unstable manifolds of $x$. We draw an edge from $\Psi_x^{p^s,p^u}$ to
$\Psi_y^{q^s,q^u}$, and write $\Psi_x^{p^s,p^u}\to \Psi_y^{q^s,q^u}$, if the following
nearest neighbor conditions hold:
\begin{enumerate}[$\circ$]
\item The non-uniform hyperbolicities at $f(x)$ and $y$ are close, as well as the non-uniform
hyperbolicities at $f^{-1}(y)$ and $x$; formally, these are expressed
in terms of $\varepsilon$--overlaps between Pesin charts, see \cite[Definition 3.1]{Sa13} or
condition (GPO1) in \cite[Section 4]{LS16}.
\item The parameters $p^s,p^u,q^s,q^u$ are ``as large as possible''; formally, these are
expressed in terms of two greedy recursions, see condition (GPO2)
in \cite[Section 4]{LS16}.
\end{enumerate}

\medskip
\noindent
{\sc Generalized pseudo-orbit:} A {\em generalized pseudo-orbit {\rm (}gpo{\rm )}} is a sequence
$\{v_n\}_{n\in\mathbb Z}$ of double Pesin charts s.t. $v_n\to v_{n+1}$ for all $n\in\mathbb Z$.
A {\em positive gpo} is a sequence $\{v_n\}_{n\geq 0}$ of double Pesin charts s.t.
$v_n\to v_{n+1}$ for all $n\geq 0$; a {\em negative gpo} is a sequence $\{v_n\}_{n\leq 0}$
of double Pesin charts s.t. $v_{n-1}\to v_n$ for all $n\leq 0$. See \cite[Section 4]{LS16}.

\medskip
Let $\Psi_x^{p^s,p^u}$ be a double Pesin chart. An {\em $s$--admissible manifold} at $\Psi_x^{p^s,p^u}$
is the image under of $\Psi_x^{p^s,p^u}$ of the graph of a $C^{1+\beta/3}$ function that satisfies 
some regularity properties, see \cite[Section 4]{LS16} or \cite[Definition 1.3.10]{Ova16}. Similarly,
we define {\em $u$--admissible manifolds} at $\Psi_x^{p^s,p^u}$.
Using that $f$ is uniformly hyperbolic in Pesin charts,
we can introduce two graph transforms for each edge $v\to w$:
the {\em stable graph transform}  $\mathfs F^s_{v,w}$ sends $s$--admissible manifolds at $w$ to $s$--admissible
manifolds at $v$; the {\em unstable graph transform} $\mathfs F^u_{v,w}$ sends $u$--admissible
manifolds at $v$ to $u$--admissible manifolds at  $w$.
These maps are contractions, see \cite[Proposition 4.14]{Sa13}. Applying them along
positive and negative gpo's and passing to the limit, we define the
stable/unstable manifold of a positive/negative gpo.

\medskip
\noindent
{\sc Stable/unstable manifold of positive/negative gpo:} The {\em stable manifold}
of a positive gpo $\{v_n\}_{n\geq 0}=\{\Psi_{x_n}^{p^s_n,p^u_n}\}_{n\geq 0}$
is the $s$--admissible manifold at $v_0$,
$$
V^s[\{v_n\}_{n\geq 0}]=\{x\in \Psi_{x_0}(R[p^s_0]):f^k(x)\in \Psi_{x_k}(R[10Q_\varepsilon(x_k)]),\forall k\geq 0\},
$$
equal to the unique limit point obtained by the application of the sequence of contractions
$\mathfs F^s_{v_0,v_1}\circ\cdots\circ\mathfs F^s_{v_{n-1},v_n}$, $n\geq 1$.
The {\em unstable manifold}
of a negative gpo $\{v_n\}_{n\leq 0}=\{\Psi_{x_n}^{p^s_n,p^u_n}\}_{n\leq 0}$
is the $u$--admissible manifold at $v_0$,
$$
V^u[\{v_n\}_{n\leq 0}]=\{x\in \Psi_{x_0}(R[p^u_0]):f^k(x)\in \Psi_{x_k}(R[10Q(x_k)]),\forall k\leq 0\},
$$
equal to the unique limit point obtained by the application of the sequence of contractions
$\mathfs F^u_{v_{-1},v_0}\circ\cdots\circ\mathfs F^u_{v_{n-1},v_{n}}$, $n\leq 0$.
See \cite[Proposition 4.15]{Sa13}.

\medskip
We note that $V^s[\{v_n\}_{n\geq 0}]$ and $V^u[\{v_n\}_{n\leq 0}]$ are
stable and unstable manifolds in the sense of Pesin, see \cite[Proposition 6.3]{Sa13}.  

\medskip
\noindent
{\sc Shadowing:} Each gpo $\{v_n\}_{n\in\mathbb Z}$
{\em shadows} a unique point, equal to the intersection of the admissible manifolds
$V^s[\{v_n\}_{n\geq 0}]$ and $V^u[\{v_n\}_{n\leq 0}]$.
If $\{v_n\}_{n\in\mathbb Z}=\{\Psi_{x_n}^{p^s_n,p^u_n}\}_{n\in\mathbb Z}$,
this intersection is the unique point $x\in M$ s.t. $f^n(x)\in \Psi_{x_n}(R[Q(x_n)])$ for all $n\in\mathbb Z$.
See \cite[Theorem 4.16]{Sa13} and \cite[Theorem 4.2]{LS16}.

\medskip
The next step is to pass from the set of all double Pesin charts
to a countable family $\mathfs A$ s.t. generic points of $\chi$--hyperbolic measures are shadowed
by gpo's in $\mathfs A^{\mathbb Z}$. In the original work of Sarig, this is
in \cite[Proposition 3.5]{Sa13}. See also \cite[Theorem 5.1]{LM16} for a cleaner version.
The underlying idea used to prove these results is simple:
the set of all double Pesin charts is the union of countably many precompact subsets. Being precompact,
each of these subsets has a contable and dense subset. The union of these
coutable and dense subsets defines the countable family $\mathfs A$.
Let $\Sigma$ be the topological Markov shift with vertex set $\mathfs A$ and edge relation defined as above.

\medskip
\noindent
{\sc The coding $\pi$:} 
It is the map $\pi:\Sigma\to M$, $\pi[\{v_n\}_{n\in\mathbb Z}]:=V^s[\{v_n\}_{n\geq 0}]\cap V^u[\{v_n\}_{n\leq 0}]$.

\medskip
\noindent
{\sc The family $\mathfs Z$:} It is the family $\mathfs Z=\{Z_v:v\in\mathfs A\}$, where
$$
Z_v:=\{\pi(\underline v):\underline v=\{v_n\}_{n\in\mathbb Z}\in\Sigma^\#\text{ s.t. }v_0=v\}.
$$

\medskip
In general, each $Z_v$ is neither closed nor open.
Although the elements of $\mathfs Z$ might intersect non-trivially, the family
$\mathfs Z$ is locally finite: for each $Z\in\mathfs Z$, the set $\{Z'\in\mathfs Z:Z\cap Z'\neq\emptyset\}$
is finite. See \cite[Theorem 10.2]{Sa13}. This property is certainly one of the main difficulties
encountered in \cite{Sa13}. Let $Z\in\mathfs Z$, say $Z=Z(v)$.

\medskip
\noindent
{\sc Invariant fibres on $\mathfs Z$}: Given $x\in Z$, let
$W^s(x,Z):=V^s[\{v_n\}_{n\geq 0}]\cap Z$ and $W^u(x,Z):=V^u[\{v_n\}_{n\leq 0}]\cap Z$
for some (any) $\underline v=\{v_n\}_{n\in\mathbb Z}\in\Sigma^\#$
s.t. $\pi(\underline v)=x$ and $v_0=v$. See \cite[Definition 10.3]{Sa13}.

\medskip
Apply a {\em Bowen-Sina{\u\i} refinement} to the family $\mathfs Z$,
see \cite[Section 11.1]{Sa13}. This defines a new family $\mathfs R$ of disjoint sets that
covers the same set as $\mathfs Z$. The family $\mathfs R$ has three main properties, as we will now explain:
local finiteness with respect to $\mathfs Z$, product structure, and the Markov property.
Local finiteness with respect to $\mathfs Z$ follows by the local finiteness of $\mathfs Z$
and is expressed by two properties:
\begin{enumerate}[$\circ$]
\item For each $R\in\mathfs R$, the set $\{Z\in\mathfs Z:Z\supset R\}$ is finite.
\item For each $Z\in\mathfs Z$, the set $\{R\in\mathfs R:R\subset Z\}$ is finite.
\end{enumerate}
See \cite[Section 7.1]{LM16}. To define the product structure and Markov property
on $\mathfs R$, we need the following definition.

\medskip
\noindent
{\sc Invariant fibres on $\mathfs R$:} given $R\in\mathfs R$ and $x\in R$, define
$W^s(x,R):=W^s(x,Z)\cap R$ and $W^u(x,R):=W^u(x,Z)\cap R$. See \cite[Definition 11.4]{Sa13}.

\medskip
The sets $W^{s/u}(x,R)$ are usually fractal-like subsets of $R$, contained in the Pesin stable/unstable
manifolds at $x$. Fix $R\in\mathfs R$, and let $x,y\in R$. Since $W^s(x,R)$ and $W^u(y,R)$
are subsets of an $s$--admissible and a $u$--admissible manifold at a same double Pesin chart,
their intersection consists of at most one element. The {\em product structure} property
states that $W^s(x,R)$ and $W^u(y,R)$ indeed intersect at one point, denoted by $[x,y]$,
and that $[x,y]\in R$. See \cite[Proposition 11.5]{Sa13}.

\medskip
\noindent
{\sc Markov property:} Let $R,S\in\mathfs R$, and assume that $x\in R$ and $f(x)\in S$.
Then
$$
f[W^s(x,R)]\subset W^s(f(x),S)\text{ and }f^{-1}[W^u(f(x),S)]\subset W^u(x,R).
$$

\medskip
See \cite[Proposition 11.7]{Sa13}. Let $\widehat\Sigma$ be the topological Markov shift with
vertex set $\mathfs R$ and edge relation $R\to S$ iff $f(R)\cap S\neq\emptyset$. 

\medskip
\noindent
{\sc The coding $\widehat\pi$:} It is the map $\widehat\pi:\widehat\Sigma\to M$ defined
by the equality
$$
\{\widehat\pi(\underline R)\}:=\bigcap_{n\geq 0}\overline{f^n(R_{-n})\cap\cdots\cap f^{-n}(R_n)},
\ \text{ for }\underline R=\{R_n\}_{n\in\mathbb Z}\in\widehat\Sigma.
$$
This map is well-defined for two reasons: by the Markov property, the right-hand side is
the intersection of a descending chain of closed sets; by the non-uniform hyperbolicity,
the diameters decrease exponentially fast. See \cite[Section 12.2]{Sa13} for details.  
A point of attention is that, since each $R\in\mathfs R$ is usually neither closed nor open,
the good definition of $\widehat\pi$ requires that we take closures. This could potentially
increase the image of $\widehat\pi$ and prevent finiteness-to-one, but fortunately there is a relation
between the images of $\widehat\pi$ and $\pi$, as observed in \cite[Lemma 12.2]{Sa13}.
The proof of Lemma \ref{l.product} will use a stronger version of \cite[Lemma 12.2]{Sa13},
as we will now state. Given a finite path $v_n\to\cdots\to v_m$ on $\Sigma$, let
$$
Z_n[v_n,\ldots,v_m]:=\{\pi(\underline w):\underline w\in\Sigma^\#\text{ and }w_n=v_n,\ldots,w_m=v_m\}.
$$
Similarly, given a finite path $R_n\to\cdots\to R_m$ on $\widehat\Sigma$, let
$$
_{n}[R_n,\ldots,R_m]=\bigcap_{k=n}^m f^{-k}(R_k)=\{x\in M: f^n(x)\in R_n,\ldots,f^m(x)\in R_m\}.
$$

\noindent
{\sc Claim:} For each $\underline R=\{R_n\}_{n\in\mathbb Z}\in\widehat\Sigma$ and
$Z_v\supset R_0$,
there exists $\underline v=\{v_n\}_{n\in\mathbb Z}\in\Sigma$ with $v_0=v$
s.t. $_{n}[R_n,\ldots,R_m]\subset Z_n[v_n,\ldots,v_m]$ for every $n\leq m$.
In particular, $\widehat\pi(\underline R)=\pi(\underline v)$.

\begin{proof}[Proof of the claim.]
The difference of the above claim to \cite[Lemma 12.2]{Sa13} is that we fix some $Z_v\supset R_0$
and require that $v_0=v$. The proof is similar to the proof of \cite[Lemma 12.2]{Sa13}, but we
include the details for completeness.
Fix $\underline R=\{R_n\}_{n\in\mathbb Z}$.
For each $k\geq 0$, fix some $y_k\in~_{-k}[R_{-k},\ldots,R_k]$. Since $\mathfs R$ and $\mathfs Z$
cover the same subset of $M$, there is a gpo
$\underline v^{(k)}=\{v^{(k)}_\ell\}_{\ell\in\mathbb Z}$ with $v^{(k)}_0=v$ s.t. $y_k=\pi[\underline v^{(k)}]$.
Since the degrees of the graph defining $\Sigma$ are finite,
for each $\ell\geq 0$ there are finitely many possibilities for the tuple $(v^{(k)}_{-\ell},\ldots,v^{(k)}_{\ell})$.
By a diagonal argument, there is a gpo $\underline v=\{v_\ell\}_{\ell\in\mathbb Z}$
s.t. for each $\ell\geq 0$ the equality $(v_{-\ell},\ldots,v_\ell)=(v^{(k)}_{-\ell},\ldots,v^{(k)}_{\ell})$
holds for infinitely many $k\geq 0$. We will show that $\underline v$ satisfies the claim.

\medskip
Clearly $v_0=v$. For the other statements, firstly note that if $k\geq |n|$ then
$f^n(y_k)=\pi[\sigma^n(\underline v^{(k)})]$,
and so $R_n\subset Z_{v^{(k)}_n}$. In particular $R_n\subset Z_{v_n}$. Now let
$n\leq m$, and take $k\geq |m|,|n|$. Let $y\in~_{n}[R_n,\ldots,R_m]$. We wish to show
that $y\in Z_n[v_n,\ldots,v_m]$.
\begin{enumerate}[$\circ$]
\item Since $f^n(y)\in R_n\subset Z_{v_n}$, there is $\underline u\in\Sigma^\#$ with $u_0=v_n$ s.t.
$f^n(y)=\pi(\underline u)$.
\item Since $f^m(y)\in R_m\subset Z_{v_m}$, there is $\underline w\in\Sigma^\#$ with $w_0=v_m$
s.t. $f^m(y)=\pi(\underline w)$.
\end{enumerate}
Define $\underline a=\{a_\ell\}_{\ell\in\mathbb Z}$ by:
$$
a_\ell=\left\{
\begin{array}{ll}
u_{\ell-n}&, \ell\leq n\\
v_\ell&, n\leq \ell\leq m\\
w_{\ell-m}&, \ell\geq m.
\end{array}
\right.
$$
Since both $\underline u,\underline w\in\Sigma^\#$, also $\underline a\in\Sigma^\#$. Observe that:
\begin{enumerate}[$\circ$]
\item If $\ell\leq n$ then $f^\ell(y)=\pi[\sigma^{\ell-n}(\underline u)]\in Z_{u_{\ell-n}}=Z_{a_\ell}$.
\item If $n\leq \ell\leq m$ then $f^\ell(y)\in R_\ell\subset Z_{v_\ell}=Z_{a_\ell}$.
\item If $\ell\geq m$ then $f^\ell(y)=\pi[\sigma^{\ell-m}(\underline w)]\in Z_{w_{\ell-m}}=Z_{a_\ell}$.
\end{enumerate}
This shows that the gpo $\underline a$ shadows $y$, so $\pi(\underline a)=y$.
In particular $y\in Z_n[v_n,\ldots,v_m]$. To prove the equality $\widehat\pi(\underline R)=\pi(\underline v)$,
firstly observe that
$$\{\widehat\pi(\underline R)\}=\bigcap_{n\geq 0}\overline{_{-n}[R_{-n},\ldots,R_n]}\subset
\bigcap_{n\geq 0}\overline{Z_{-n}[v_{-n},\ldots,v_n]}.
$$
Since
$$
\{\pi(\underline v)\}=\bigcap_{n\geq 0}Z_{-n}[v_{-n},\ldots,v_n]\subset
\bigcap_{n\geq 0}\overline{Z_{-n}[v_{-n},\ldots,v_n]},
$$
the intersection $\bigcap_{n\geq 0}\overline{Z_{-n}[v_{-n},\ldots,v_n]}$ is non-empty.
But it consists of a descending chain of closed sets
with diameter converging to zero, so it equals $\{\pi(\underline v)\}$.
It follows that $\widehat\pi(\underline R)=\pi(\underline v)$.
\end{proof}

\begin{proof}[Proof of Lemma $\ref{l.product}$.]
Let $\underline R=\{R_n\}_{n\in\mathbb Z},
\underline S=\{S_n\}_{n\in\mathbb Z}\in\widehat\Sigma$ with $R_0=S_0=R$. Let
$x=\widehat\pi(\underline R)$ and $y=\widehat\pi(\underline S)$. We have
$[\underline R,\underline S]=\underline U$, where $\underline U=\{U_n\}_{n\in\mathbb Z}$
is defined by 
$$
U_n=\left\{
\begin{array}{ll}
R_n&, n\geq 0\\
S_n&, n\leq 0.
\end{array}
\right.
$$
We wish to show that $\widehat\pi([\underline U])=[x,y]$. Fix some 
$Z_v\supset R$. By the claim, there are gpo's
$\underline v=\{v_n\}_{n\in\mathbb Z},\underline w=\{w_n\}_{n\in\mathbb Z}$ with $v_0=w_0=v$
s.t. $\pi(\underline v)=x$ and $\pi(\underline w)=y$. Let $V^s:=V^s[\{v_n\}_{n\geq 0}]$
and $V^u:=V^u[\{w_n\}_{n\leq 0}]$, then $V^s\cap V^u=\{[x,y]\}$.

\medskip
Now define $\underline u=\{u_n\}_{n\in\mathbb Z}$ by 
$$
u_n=\left\{
\begin{array}{ll}
v_n&, n\geq 0\\
w_n&, n\leq 0.
\end{array}
\right.
$$
Clearly $\widehat\pi(\underline U)=\pi(\underline u)$, since:
\begin{enumerate}[$\circ$]
\item If $n\geq 0$ then $U_n=R_n\subset Z_{v_n}=Z_{u_n}$.
\item If $n\leq 0$ then $U_n=S_n\subset Z_{w_n}=Z_{u_n}$.
\end{enumerate}
But $V^s[\{u_n\}_{n\geq 0}]=V^s$ and $V^u[\{u_n\}_{n\leq 0}]=V^u$, so
$\{\pi(\underline u)\}=V^s\cap V^u=\{[x,y]\}$. Hence $\widehat\pi([\underline U])$ and $[x,y]$ both
equal the unique intersection between $V^s$ and $V^u$.
\end{proof}

The last part of the work of Sarig \cite{Sa13} consists of establishing the finiteness-to-one property
of $\widehat\pi$. Fix $x\in \widehat\pi[\widehat\Sigma^\#]$.
The original statement \cite[Theorem 12.8]{Sa13} stated wrongly that $\widehat\pi^{-1}(x)$ is finite,
but what is actually proved there is the finiteness of the intersection
$\widehat\pi^{-1}(x)\cap\widehat\Sigma^\#$. This inaccuracy, pointed out in \cite[Appendix A]{LS16},
does not cause problems for applications, since
$\widehat\Sigma\setminus\widehat\Sigma^\#$ does not contain any periodic orbits and 
has zero measure for every shift invariant probability
measure (Poincar\'e recurrence theorem). To conclude this appendix, we explicitly state the bound on
the cardinality of $\widehat\pi^{-1}(x)\cap\widehat\Sigma^\#$.

\medskip
\noindent
{\sc Affiliation:} We say that $R,R'\in\mathfs R$ are {\em affiliated} if there are $Z,Z'\in\mathfs Z$
s.t. $R\subset Z$, $R'\subset Z'$ and $Z\cap Z'\neq\emptyset$.

\medskip
Given $R\in\mathfs R$, define
$$
N(R):=\#\{(R',Z')\in\mathfs R\times\mathfs Z:R,R'\text{ are affiliated and }Z'\supset R'\}.
$$
The local finiteness properties of $\mathfs R$ with respect to $\mathfs Z$ imply that $N(R)<\infty$.
Let $x=\widehat\pi(\underline R)$ with $\underline R\in\widehat\Sigma^\#$. Sarig proved in 
\cite[Theorem 12.8]{Sa13} that if $R_n=R$ for infinitely many $n\geq 0$ and
$R_n=S$ for infinitely many $n\leq 0$, then $\widehat\pi^{-1}(x)\cap\widehat\Sigma^\#$ has
at most $N(R)N(S)$ elements. The proof is an adaptation of the ``diamond argument'' of Bowen
\cite[pp. 13--14]{Bo78}.

\bibliographystyle{plain}

\end{document}